\def \tilde {\widetilde}
\def \l {{\lambda}}
\def \R {{\mathbb {R}}}
\def \N {{\mathbb {N}}}
\def \phi {{\varphi}}
\newcommand{\1}{1\!\!\!\;\mathrm{l}}
\newcommand{\hs}[1]{\hskip -#1pt}
\newtheorem{theorem}{Theorem}[section]
\newtheorem{lemma}[theorem]{Lemma}
\newtheorem{proposition}[theorem]{Proposition}
\theoremstyle{definition}
\newtheorem{definition}[theorem]{Definition}
\newtheorem{remark}[theorem]{Remark}
\newtheorem{ipos}[theorem]{Hypotheses}
\title[Optimal H\"older regularity]{Optimal H\"older
regularity for nonautonomous Kolmogorov equations}
\author[L. Lorenzi]{}
\subjclass{Primary: 35B65; Secondary: 35K15, 35R05}
 \keywords{nonautonomous elliptic and parabolic operators with unbounded
 coefficients, discontinuous coefficients, optimal Schauder
 estimates}
 \email{luca.lorenzi@unipr.it}
\thanks{Work partially supported by the M.I.U.R. research project
Prin 2006 ``Kolmogorov equations''.}
\begin{document}
\maketitle

\centerline{\scshape Luca Lorenzi}
\medskip
{\footnotesize
 \centerline{Dipartimento di Matematica, Universit\`a degli Studi di Parma}
 \centerline{Viale G.P. Usberti 53/A, I-43124 Parma, Italy}
} 

\vskip .5truecm
\centerline{\dedicatory{\footnotesize To Claude-Michel
Brauner on the occasion of his 60th birthday}}
\bigskip


\begin{abstract}
We consider a class of nonautonomous elliptic operators ${\mathscr
A}$ with unbounded coefficients defined in  $[0,T]\times\R^N$ and we
prove optimal Schauder estimates for the solution to the parabolic
Cauchy problem $D_tu={\mathscr A}u+f$, $u(0,\cdot)=g$.
\end{abstract}

\section{Introduction}
\setcounter{section}{1} \setcounter{equation}{0}
\setcounter{theorem}{0}
In this paper we deal with a class of
nonautonomous elliptic operators ${\mathscr A}$ defined by
\begin{align*}
{\mathscr A}\varphi(t,x)&=
\sum_{i,j=1}^Nq_{ij}(t,x)D_{ij}\varphi(x)+\sum_{j=1}^Nb_j(t,x)D_j\varphi(x)+c(t,x)\varphi(x),
\end{align*}
on smooth functions $\varphi:\R^N\to\R$. We consider possibly
unbounded coefficients defined in $[0,T]\times\R^N$, smooth with
respect to $x$ and continuous or just measurable with respect to
time. We are interested in optimal Schauder estimates for the
solution to the Cauchy problem
\begin{equation}
\left\{
\begin{array}{lll}
D_tu(t,x)={\mathscr A}u(t,x)+g(t,x), & t\in [0,T], &x\in\R^N,\\[1.5mm]
u(0,x)=f(x), && x\in\R^N.
\end{array}
\right. \label{prob-intro}
\end{equation}

In the case when the coefficients of the operator ${\mathscr A}$ are
smooth enough in $[0,T]\times\R^N$ and satisfy suitable algebraic
and growth conditions at infinity (see Hypotheses \ref{ipos-1}), we
prove that, for any $f\in C^{2+\theta}_b(\R^N)$ and any continuous
function $g:[0,T]\times\R^N\to\R$ such that $g(t,\cdot)\in
C^{\theta}_b(\R^N)$ for any $t\in [0,T]$ and
\begin{equation}
\sup_{t\in [0,T]}\|g(t,\cdot)\|_{C^{\theta}_b(\R^N)}< +\infty,
\label{g-intro}
\end{equation}
there exists a unique classical bounded solution to problem
\eqref{prob-intro}, i.e., there exists a unique bounded function
$u:[0,T]\times\R^N\to\R$ which (i) is continuously differentiable in
$[0,T]\times\R^N$, once with respect to the time variable and twice
with respect to the spatial variables, (ii) solves the differential
equation in \eqref{prob-intro}, (iii) satisfies the condition
$u(0,\cdot)\equiv f$. Further, we have a Schauder type regularity result, i.e.,
the function $u(t,\cdot)$
belongs to $C^{2+\theta}_b(\R^N)$ for any $t\in [0,T]$ and there
exists a positive constant $C$ such that
\begin{equation}
\sup_{t\in [0,T]}\|u(t,\cdot)\|_{C^{2+\theta}_b(\R^N)}\le C\bigg (
\|f\|_{C^{2+\theta}_b(\R^N)}+\sup_{t\in
[0,T]}\|g(t,\cdot)\|_{C^{\theta}_b(\R^N)}\bigg ).
\label{stima-intro}
\end{equation}

In the case of discontinuous coefficients, under suitable
assumptions (see Hypotheses \ref{ipos-2}), we can still prove an
existence and uniqueness theorem for problem \eqref{prob-intro} as
well as optimal Schauder estimates. Here, we assume that the
function $g$ is measurable in $[0,T]\times\R^N$ and it satisfies
condition \eqref{g-intro}.
The lack of regularity of the data with respect to the time variable
prevents the solution from being continuously differentiable with respect to
the time variable. Hence, we introduce an appropriate definition of
solution to problem \eqref{prob-intro}, adapted to the discontinuity
of the data (see Definition \ref{defin-1}). Then, we prove that there exists a unique
solution to the problem \eqref{prob-intro} in the sense of
Definition \ref{defin-1}. This solution $u$ satisfies estimate
\eqref{stima-intro}. Roughly speaking, the main
difference with the case when the coefficients are smooth, is that
now the function $u(\cdot,x)$ is differentiable for any $x\in\R^N$ almost everywhere in
$[0,T]$ and that the differential equation is
satisfied almost everywhere in $[0,T]\times\R^N$. Both for smooth and nonsmooth
coefficients, the uniqueness of
the solution to problem \eqref{prob-intro} follows from a variant of
the classical maximum principle (see Propositions \ref{maximum} and
\ref{maxprinc}), which can be proved assuming the existence of a
suitable Lyapunov function (see Hypothesis \ref{ipos-1}(vii)).

For autonomous equations with unbounded smooth coefficients,
Schauder theorems of this type were obtained in
\cite{BL,lunardi-studia} as a consequence of optimal estimates in
the sup norm of the spatial derivatives of the solution to the
homogeneous Cauchy problem
\begin{equation}
\left\{
\begin{array}{lll}
D_tu(t,x)={\mathscr A}u(t,x), &t\in [0,T], &x\in\R^N,\\[1.5mm]
u(0,x)=f(x), &&x\in\R^N,
\end{array}
\right. \label{pb-omog-intro}
\end{equation}
when $f$ belongs to suitable spaces of H\"older continuous
functions. See also \cite{cerrai}.

To the best of our knowledge, the first papers dealing with optimal
Schauder estimates of the type \eqref{stima-intro} are
\cite{KCL,KCL-1,KCL-2} where bounded and continuous coefficients
satisfying \eqref{g-intro} were considered. Recently, the results in
\cite{KCL,KCL-1,KCL-2} have been extended in \cite{Lo1} to
discontinuous bounded coefficients still satisfying \eqref{g-intro}
and in \cite{Lo2} to operators ${\mathscr A}$ of the type
\begin{align*}
{\mathscr A}\varphi(t,x)=
\sum_{i,j=1}^Nq_{ij}(t)D_{ij}\varphi(x)+\sum_{i,j=1}^Nb_{ij}(t)x_jD_i\varphi(x),
\end{align*}
with bounded coefficients $q_{ij}$ and $b_{ij}$ ($i,j=1,\ldots,N$). When ${\mathscr A}$ is an Ornstein-Uhlenbeck
operator things are easier than in the general case, since an
explicit formula for the solution to \eqref{pb-omog-intro} is known.
Hence, one can obtain uniform estimates for the spatial derivatives of the solution to problem
\eqref{pb-omog-intro} just differentiating the formula which defines
$u$. On the contrary, no explicit formulas are available for more
general elliptic operators.

Very recently, Krylov and Priola (see \cite{KP}) have studied more
general nonautonomous elliptic operators with unbounded and less
regular coefficients, using different techniques. Their interesting
results show global Schauder estimates for the solutions to the
parabolic equation $D_tu+{\mathscr A}u=f$ in $(T,+\infty)\times\R^N$
when $T\in [-\infty,+\infty)$ and, as a byproduct, Schauder
estimates for the solution to problem \eqref{prob-intro}, thus
extending the results of \cite{Lo1}. Roughly speaking, in \cite{KP}
the diffusion coefficients are supposed to be bounded, whereas the
drift coefficients may grow at most linearly at infinity, with
respect to the spatial variables.

In this paper we prove optimal Schauder estimates for nonautonomous
elliptic operators whose coefficients may grow faster than linearly
at infinity. In the first part of the paper (see Section
\ref{sect-2}), we consider the case when the coefficients are smooth
in $[0,T]\times\R^N$, adapting the techniques in
\cite{BL,lunardi-studia}. First, in Subsection \ref{subsect-2.1}, we
prove that problem \eqref{pb-omog-intro} admits a unique bounded classical
solution $u$ for any $f\in C_b(\R^N)$. Subsection \ref{subsect-2.2}
is then devoted to prove uniform estimates for the derivatives (up
to third-order, and with respect to the sup-norm in $\R^N$) of the
solution to problem \eqref{pb-omog-intro} when $f$ belongs to
suitable spaces of H\"older continuous functions. Finally, these
uniform estimates and an abstract interpolation method (see
\cite{lunardi-sem}) yield optimal Schauder estimates (see Subsection
\ref{subsect-2.3}). In the second part of the paper, we turn our
attention to the case of discontinuous (in time) coefficients. We
prove the optimal Schauder estimates approximating the operator
${\mathscr A}$ by a sequence of elliptic operators ${\mathscr
A}^{(n)}$ which satisfy the assumptions of the first part of the
paper, and using a compactness argument. The arguments in the proof
of Theorem \ref{thm-main-2} can then be used to weaken a bit the
assumptions of Section \ref{sect-2} and prove the Schauder estimates
of Theorem \ref{thm-schauder} without any assumption of H\"older in
time regularity of the coefficients of ${\mathscr A}$. See Theorem
\ref{thm-schauder-cont-2}. Finally, in Section \ref{sect-4}, we
exhibit a class of elliptic operators to which the optimal Schauder
estimates may be applied.

\subsection*{Notations}
$C_b(\R^N)$ denotes the set of all bounded and continuous functions
$f:\R^N\to\R$. We endow it with the sup-norm $\|\cdot\|_{\infty}$.
For any $k>0$ (possibly $k=+\infty$), $C^k_b(\R^N)$ denotes the
subset of $C_b(\R^N)$ of all functions $f:\R^N\to\R$ that are
continuously differentiable in $\R^N$ up to $[k]$th-order, with
bounded derivatives and such that the $[k]$th-order derivatives are
$(k-[k])$-H\"older continuous in $\R^N$. $C^k_b(\R^N)$ is endowed
with the norm $\|f\|_{C^k_b(\R^N)}:=\sum_{|\alpha|\le
[k]}\|D^{\alpha}f\|_{\infty}
+\sum_{|\alpha|=[k]}[D^{\alpha}f]_{C^{k-[k]}_b(\R^N)}$.
$C^k_c(\R^N)$ ($k\in\mathbb N\cup\{+\infty\}$)  denotes the subset
of $C^k_b(\R^N)$ of all compactly supported functions.

For any domain $D\subset\R\times\R^N$ and any $\alpha\in (0,1)$,
$C^{\alpha/2,\alpha}(D)$ denotes the space of all
H\"older-continuous functions with respect to the parabolic distance
of $\R^{N+1}$. Similarly, for any $h,k\in\N\cup\{0\}$ and any
$\alpha\in [0,1)$, $C^{h+\alpha/2,k+\alpha}(D)$ denotes the set of
all functions $f:D\to\R$ which (i) are continuously differentiable
in $D$ up to the $h$th-order with respect to time variable, and up
to the $k$th order with respect to the spatial variables, (ii) the
derivatives of maximum order are in $C^{\alpha/2,\alpha}(D)$ (here,
$C^{0,0}:=C$). Finally, we use the notation
$C^{h+\alpha/2,k+\alpha}_{\rm loc}(D)$ to denote the set of all
functions $f:D\to\R$ which are in $C^{h+\alpha/2,k+\alpha}(D_0)$ for
any compact set $D_0\subset D$.

For any measurable set $E$, we denote by $\1_E$ the characteristic
function of $E$, i.e., $\1_E(x)=1$ if $x\in E$, $\1_E(x)=0$
otherwise.

Given a $N\times N$ matrix we denote by ${\rm Tr}(Q)$ its trace.
Further, we denote by $\langle \cdot,\cdot\rangle$ the Euclidean
inner product of $\R^N$.

\section{The case of smooth coefficients}
\label{sect-2} \setcounter{equation}{0} Throughout this section, we
make the following assumptions on the coefficients $q_{ij}$, $b_j$
($i,j=1,\ldots,N$) and $c$ of the operator ${\mathscr A}$. We denote
by $Q(t,x)$ and $b(t,x)$ the matrix whose entries are the coefficients
$q_{ij}(t,x)$, and the vector whose entries are the coefficients
$b_j(t,x)$, respectively.
\begin{ipos}
\label{ipos-1} ~
\par
\noindent
\begin{enumerate}[\rm (i)]
\item
the coefficients $q_{ij},b_j$ $(i,j=1,\ldots,N$) and $c$ are thrice
continuously differentiable with respect to the spatial variables in
$[0,T]\times\R^N$ and they belong to
$C^{\delta/2,\delta}([0,T]\times B(0,R))$ for some $\delta\in (0,1)$
and any $R>0$, together with their first-, second- and third-order
spatial derivatives;
\item
$q_{ij}(t,x)=q_{ji}(t,x)$ for any $i,j=1,\ldots, N$ and any
$(t,x)\in [0,T]\times\R^N$, and
\begin{eqnarray*}
\langle Q(t,x)\xi,\xi\rangle\ge\nu(t,x)|\xi|^2,\qquad\;\,t\in
[0,T],\;\,\xi,x\in\R^N,
\end{eqnarray*}
for some function $\nu:(0,T)\times\R^N\to\R$ such that
\begin{eqnarray*}
\inf_{(t,x)\in (0,T)\times\R^N}\nu(t,x)=\nu_0>0; \end{eqnarray*}
\item
there exist positive constants $C_1,C_2,C_3$ such that
\begin{align}
&|Q(t,x)x|\le C_1(1+|x|^2)\nu(t,x),\label{cond:qij-1}\\
&{\rm Tr}(Q(t,x))\le C_2(1+|x|^2)\nu(t,x), \label{cond:qij-2}\\
&\langle b(t,x),x\rangle \le C_3(1+|x|^2)\nu(t,x), \label{cond:b}
\end{align}
for any $t\in [0,T]$ and any $x\in\R^N$;
\item
$c(t,x)\le c_0$ for some real constant $c_0$ and any $(t,x)\in
[0,T]\times\R^N$;
\item
there exist three positive constants $K_1$, $K_2$ and $K_3$ such
that
\begin{align*}
&|D^{\beta}q_{ij}(t,x)|\le K_{|\beta|}\nu(t,x),\\
&\sum_{h,k,l,m=1}^ND_{lm}q_{hk}(t,x)\xi_{hk}\xi_{lm}\le
K_2\nu(t,x)\sum_{h,k=1}^N \xi_{hk}^2,
\end{align*}
for any $i,j=1,\ldots,N$, any $|\beta|=1,3$, any $N\times N$ symmetric matrix $\Xi=(\xi_{hk})$ and any $(t,x)\in
[0,T]\times\R^N$;
\item
there exist three functions $d,r,\varrho:[0,T]\times\R^N\to\R$, with $\varrho\ge \varrho_0$ for some positive constant $\varrho_0$,
and positive constants $L_1,L_2,L_3$ such that
\begin{align}
\label{dissip}
&\langle Db(t,x)\xi,\xi\rangle\le d(t,x)|\xi|^2,\\
\label{cond-deriv-b}
&|D^\beta b_j (t,x)|\le r(t,x),\\
\label{cond-deriv-c}
&|D^{\gamma}c(t,x)|\le\varrho(t,x),\\
\label{cond-funz}
&d(t,x)+L_1r(t,x)+L_2\varrho^2(t,x)\le L_3\nu(t,x),
\end{align}
for any $t\in [0,T]$, any $|\beta|=2,3$, any $|\gamma|=1,2,3$, any $j=1,\ldots,N$ and
any $x,\xi\in\R^N$, where $Db=(D_jb_i)$;
\item
there exist a positive function $\varphi:\R^N\to\R$ and $\lambda>0$
such that $\varphi$ tends to $+\infty$ as $|x|\to +\infty$ and
\begin{equation}
\sup_{(t,x)\in [0,T]\times\R^N}\{{\mathscr
A}\varphi(t,x)-\lambda\varphi(x)\}<+\infty. \label{liapunov}
\end{equation}
\end{enumerate}
\end{ipos}

\subsection{The homogeneous Cauchy problem associated with the
operator ${\mathscr A}$} \label{subsect-2.1}
In this subsection, for
any $s\in [0,T)$, we consider the homogeneous Cauchy problem
\begin{equation}
\left\{
\begin{array}{lll}
D_tu(t,x)={\mathscr A}u(t,x), & t\in (s,T], & x\in \R^N,\\[1.5mm]
u(s,x)=f(x), && x\in \R^N.
\end{array}
\right. \label{pb-omog}
\end{equation}
We are going to prove that, for any $f\in C_b(\R^N)$, problem
\eqref{pb-omog} admits a unique bounded classical solution $u$
(i.e., there exists a unique bounded and continuous function
$u:[s,T]\times\R^N\to\R$ which is continuously differentiable in
$(s,T]\times\R^N$, once with respect to time and twice with respect
to the spatial variables, that satisfies \eqref{pb-omog}).

Uniqueness of the bounded classical solution to problem \eqref{pb-omog}
follows from the following variant of the classical maximum
principle.

\begin{proposition}\label{maximum}
Fix $s\in [0,T)$. If $u \in C_b([s,T]\times\R^N)\cap C^{1,2}((s,T] \times\R^N)$
satisfies
\begin{eqnarray*}
\left\{
\begin{array}{lll}
D_tu(t,x) - {\mathscr A}u(t,x) \le  0, & t\in
(s,T], &x\in\R^N,\\[1.5mm]
u(s,x) \le 0, && x \in \R^N,
\end{array}\right.
\end{eqnarray*}
then $u \leq 0$.
\end{proposition}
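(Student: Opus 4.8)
The strategy is the classical one for a maximum principle with unbounded coefficients: subtract a small multiple of a Lyapunov-type comparison function to force strict decay at infinity, apply the parabolic maximum principle on a ball, and then let the parameter go to zero. The one technical wrinkle is that the hypotheses only give us the function $\varphi$ in Hypothesis \ref{ipos-1}(vii), which is time-independent, whereas the inequality $\mathscr A\varphi-\lambda\varphi\le C_0$ involves the constant $c_0$ possibly being positive; so we will first replace $u$ by $v(t,x):=e^{-c_0(t-s)}u(t,x)$, which satisfies $D_tv-\mathscr A_0 v\le 0$ where $\mathscr A_0$ is $\mathscr A$ with the zero-order term $c$ replaced by $c-c_0\le 0$. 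This reduces matters to the case $c\le 0$, so from now on I assume $c_0=0$.

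\smallskip

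\textbf{Step 1 (the auxiliary function).} Fix $\eps>0$ and set
\[
v_\eps(t,x):=v(t,x)-\eps e^{\lambda(t-s)}\varphi(x)-\eps ,
\]
where $\varphi$ and $\lambda$ are as in Hypothesis \ref{ipos-1}(vii). Using \eqref{liapunov} (with the constant there called $C_0$, and recalling $c\le 0$ so that $\mathscr A$ applied to the constant $\eps$ is $\le 0$), a direct computation gives, for $(t,x)\in(s,T]\times\R^N$,
\[
D_tv_\eps-\mathscr Av_\eps
= (D_tv-\mathscr Av) -\eps e^{\lambda(t-s)}\bigl(\lambda\varphi(x)-\mathscr A\varphi(t,x)\bigr) -\mathscr A(\eps)
\le -\eps e^{\lambda(t-s)}\bigl(\lambda\varphi(x)-\mathscr A\varphi(t,x)\bigr)\le \eps C_0 .
\]
Hmm — this bound is not immediately $\le 0$. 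The correct device is instead to absorb $C_0$: take $\lambda$ large enough (or shift $\varphi$ by a constant) so that $\mathscr A\varphi-\lambda\varphi\le 0$; concretely, replace $\varphi$ by $\varphi+M$ with $M$ chosen so that $\lambda M\ge C_0$, which is harmless since $\varphi+M$ is still a positive function blowing up at infinity. With this normalization we get genuinely $D_tv_\eps-\mathscr Av_\eps\le 0$ on $(s,T]\times\R^N$ and $v_\eps(s,\cdot)=v(s,\cdot)-\eps\varphi-\eps\le -\eps<0$.

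\smallskip

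\textbf{Step 2 (maximum on a ball and passage to the limit).} Because $v$ is bounded and $\varphi(x)\to+\infty$ as $|x|\to\infty$, there is $R_\eps>0$ such that $v_\eps<0$ on $[s,T]\times(\R^N\setminus B(0,R_\eps))$. On the compact cylinder $[s,T]\times\overline{B(0,R_\eps)}$ the function $v_\eps$ is continuous and $C^{1,2}$ in the interior, it satisfies $D_tv_\eps-\mathscr Av_\eps\le 0$, and it is $\le 0$ on the parabolic boundary (both on $\{t=s\}$ by Step 1 and on the lateral boundary $\{|x|=R_\eps\}$ by the choice of $R_\eps$). The classical parabolic maximum principle on a bounded domain (with continuous, locally bounded coefficients, which holds by Hypothesis \ref{ipos-1}(i)) then yields $v_\eps\le 0$ on $[s,T]\times\overline{B(0,R_\eps)}$, hence $v_\eps\le 0$ on all of $[s,T]\times\R^N$. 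Letting $\eps\to 0^+$ with $t,x$ fixed gives $v(t,x)\le 0$, and undoing the exponential substitution gives $u(t,x)=e^{c_0(t-s)}v(t,x)\le 0$. This proves the claim.

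\smallskip

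\textbf{Main obstacle.} The only genuine subtlety is making the comparison term work correctly: one must ensure both that $D_tv_\eps-\mathscr Av_\eps\le 0$ (this is where the right-hand-side constant in \eqref{liapunov} must be disposed of, either by the shift $\varphi\mapsto\varphi+M$ above, or by using $e^{\lambda(t-s)}$ with $\lambda$ enlarged) and that the subtracted term still diverges at spatial infinity so that Step 2's choice of $R_\eps$ is possible; the reduction to $c\le 0$ via $v=e^{-c_0(t-s)}u$ handles the zero-order term. Everything else is the textbook bounded-domain maximum principle plus a limiting argument.
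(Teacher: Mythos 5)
Your proof is correct and takes essentially the same approach as the paper: the paper's proof (deferred to Proposition~\ref{maxprinc}) likewise normalizes $\varphi$ to $\hat\varphi=\varphi+C$ so that ${\mathscr A}\hat\varphi-\hat\lambda\hat\varphi<0$, damps $u$ by an exponential factor in $t$, subtracts a small multiple $\frac{1}{n}\hat\varphi$ of the Lyapunov function, applies the bounded-domain maximum principle on balls, and passes to the limit. The only cosmetic difference is that you split the exponential damping between $e^{-c_0(t-s)}$ applied to $u$ and $e^{\lambda(t-s)}$ applied to $\varphi$, whereas the paper uses a single factor $e^{-\hat\lambda t}$ with $\hat\lambda=\max\{\lambda,2c_0\}$.
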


\begin{proof}
The proof can be obtained repeating the arguments in the proof of
the forthcoming Proposition \ref{maxprinc}, using the classical
maximum principle instead of the Nazarov-Ural'tseva maximum
principle.
\end{proof}

\begin{theorem}
\label{thm-3.2}
Under Hypotheses $\ref{ipos-1}$, for any $f\in C_b(\R^N)$, the Cauchy problem
\eqref{pb-omog} admits a unique bounded classical solution $u$. Moreover,
\begin{equation}
\|u(t,\cdot)\|_{\infty}\le e^{c_0(t-s)}\|f\|_{\infty},\qquad\;\,t\in
[s,T], \label{estim-u-om}
\end{equation}
where $c_0$ is as in Hypothesis $\ref{ipos-1}(iv)$.
\end{theorem}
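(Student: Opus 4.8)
The plan is to obtain uniqueness from the maximum principle of Proposition~\ref{maximum} and existence by exhausting $\R^N$ with balls; Hypothesis~\ref{ipos-1}(vii) enters only through Proposition~\ref{maximum} and plays no role in the construction. For uniqueness, if $u^{(1)},u^{(2)}$ are bounded classical solutions of \eqref{pb-omog}, then $w:=u^{(1)}-u^{(2)}$ belongs to $C_b([s,T]\times\R^N)\cap C^{1,2}((s,T]\times\R^N)$, satisfies $D_tw-\mathscr{A}w=0$ in $(s,T]\times\R^N$ and $w(s,\cdot)\equiv0$; Proposition~\ref{maximum} gives $w\le0$ and, applied to $-w$, also $w\ge0$, whence $w\equiv0$.

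For existence, fix cut-off functions $\vartheta_n\in C_c^\infty(\R^N)$ with $0\le\vartheta_n\le1$, $\vartheta_n\equiv1$ on $B(0,n-1)$ and $\mathrm{supp}\,\vartheta_n\subset B(0,n)$, and for each $n$ let $u_n$ be the classical solution of the Cauchy--Dirichlet problem for $D_tu=\mathscr{A}u$ on $(s,T]\times B(0,n)$ with $u_n(s,\cdot)=\vartheta_nf$ and $u_n\equiv0$ on $(s,T]\times\partial B(0,n)$. Such a $u_n$, continuous up to the parabolic boundary and of class $C^{1,2}$ in $(s,T]\times B(0,n)$, exists by the classical parabolic theory, the coefficients of $\mathscr{A}$ being bounded, uniformly elliptic and of class $C^{\delta/2,\delta}$ on $\overline{B(0,n)}$ by Hypothesis~\ref{ipos-1}. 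Setting $v_n:=e^{-c_0(t-s)}u_n$ reduces the zeroth-order coefficient to $c-c_0\le0$, so the classical maximum principle on the bounded cylinder yields $\|u_n(t,\cdot)\|_{L^\infty(B(0,n))}\le e^{c_0(t-s)}\|f\|_\infty$ for all $t\in[s,T]$ and all $n$. On a fixed cylinder $(s',T]\times B(0,R)$ with $s<s'\le T$, the functions $u_n$ with $n>R+1$ solve a uniformly parabolic equation with locally $C^{\delta/2,\delta}$ coefficients and are equibounded, so interior parabolic Schauder estimates bound their $C^{1+\delta/2,2+\delta}$-norms on $(s'',T]\times B(0,R-1)$, $s''>s'$, uniformly in $n$. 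By the Arzel\`a--Ascoli theorem and a diagonal argument a subsequence $(u_{n_k})$ converges in $C^{1,2}_{\loc}((s,T]\times\R^N)$ to a function $u\in C^{1,2}((s,T]\times\R^N)$ which solves $D_tu=\mathscr{A}u$ in $(s,T]\times\R^N$ and satisfies $\|u(t,\cdot)\|_\infty\le e^{c_0(t-s)}\|f\|_\infty$, that is \eqref{estim-u-om}.

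It remains to show that $u$, extended by $u(s,\cdot):=f$, is continuous and bounded on $[s,T]\times\R^N$, so that it is the desired bounded classical solution of \eqref{pb-omog}; this is the only delicate step. Fix $x_0\in\R^N$ and $\eps>0$, and choose $\delta>0$ with $|f(x)-f(x_0)|\le\eps$ for $|x-x_0|\le\delta$. For $n$ so large that $\overline{B(x_0,\delta)}\subset B(0,n-1)$ one has $u_n(s,\cdot)=f$ on $B(x_0,\delta)$; on the bounded cylinder $(s,T]\times B(x_0,\delta)$, where the coefficients of $\mathscr{A}$ are bounded and uniformly elliptic, we compare $\pm\bigl(u_n-f(x_0)\bigr)$ with the barrier
\begin{equation*}
\Phi(t,x)=\eps\,e^{\omega(t-s)}+\frac{M}{\delta^{2}}\Bigl(|x-x_0|^{2}+K(t-s)\Bigr),
\end{equation*}
where $M:=(e^{c_0T}+1)\|f\|_\infty+|f(x_0)|$ dominates $|u_n-f(x_0)|$ on the lateral boundary, and $\omega,K>0$ are chosen large (depending on $\eps$, $\|f\|_\infty$, $x_0$, $\delta$ and the sup-norms of the coefficients of $\mathscr{A}$ on $\overline{B(x_0,\delta)}$, but not on $n$) so that $D_t\Phi-\mathscr{A}\Phi$ dominates $\mp c(t,x)f(x_0)$, the right-hand side of the equation satisfied by $\pm(u_n-f(x_0))$. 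Since $\Phi\ge0$ on the whole parabolic boundary of $(s,T]\times B(x_0,\delta)$, the classical maximum principle gives $|u_n(t,x)-f(x_0)|\le\Phi(t,x)$ there for all such $n$; passing to the limit along $(n_k)$ the same bound holds for $u$, and as $\Phi(t,x)\to\eps$ when $(t,x)\to(s,x_0)$ we obtain $\limsup_{(t,x)\to(s,x_0),\,t>s}|u(t,x)-f(x_0)|\le\eps$. Letting $\eps\downarrow0$ and varying $x_0$ gives continuity of $u$ up to $\{s\}\times\R^N$.

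The main obstacle is precisely this last step: producing a barrier near $t=s$ that is uniform in the exhaustion index $n$ and accommodates the zeroth-order coefficient $c$, which need not be nonnegative and is only locally bounded; the exponential factor $e^{\omega(t-s)}$ absorbs its (locally bounded) size and the comparison against the constant $M$ on the lateral boundary handles the fact that $u_n(t,\cdot)$ need not be close to $f(x_0)$ away from $x_0$. The remaining ingredients — solvability of the Cauchy--Dirichlet problems, the $L^\infty$-bound, and the interior Schauder estimates with the ensuing compactness — are routine.
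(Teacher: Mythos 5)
Your proposal is correct and follows the same overall architecture as the paper: uniqueness from Proposition~\ref{maximum}, existence by solving Cauchy--Dirichlet problems on an increasing sequence of balls, interior Schauder estimates plus Arzel\`a--Ascoli to extract a limit $u\in C^{1,2}_{\mathrm{loc}}((s,T]\times\R^N)$ solving the equation and satisfying \eqref{estim-u-om}. Where you genuinely diverge is the continuity of $u$ at $t=s$. The paper proceeds in stages: first $f\in C^{2+\delta}_c(\R^N)$ (Schauder estimates up to $t=s$), then $f\in C_c(\R^N)$ (approximation and \eqref{estim-u-om}), then general $f\in C_b(\R^N)$ via localization -- it writes $f=\eta f+(1-\eta)f$ with a cut-off $\eta$ near $x_0$, splits $u_n=v_n+w_n$, and bounds $|w_n|\le K(1-z_n)$ by comparison with the solution $z_n$ with initial datum $\eta$; the paper also reduces to $f\ge0$ so that $(u_n)$ is monotone. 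You instead put the cut-offs on the initial data ($u_n(s,\cdot)=\vartheta_nf$, which avoids the corner discontinuity the paper must tolerate), do not need the reduction to nonnegative $f$, and establish continuity at $(s,x_0)$ by a direct parabolic barrier $\Phi$ on a small cylinder, with $M$ giving the lateral bound uniformly in $n$. This is a legitimate alternative; it is more self-contained (no reduction to smooth or compactly supported initial data) but needs a bit of care in the choice of $\omega$ and $K$: the term $-c\,\Phi_2$ coming from $\mathscr{A}\Phi$ can have a positive part of size $O(KT)$, so for fixed $K$ one must take $\omega$ large enough that $\eps(\omega-\sup|c|)$ dominates it, or else shrink the time interval so that $K(1-c(t-s))\ge0$ — you gesture at this but do not write it out. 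Also note a small sign slip: the equation satisfied by $\pm(u_n-f(x_0))$ has right-hand side $\pm c(t,x)f(x_0)$, not $\mp c(t,x)f(x_0)$; since $\Phi$ is built to dominate $|c||f(x_0)|$, this is immaterial. Finally, be careful that you have overloaded the symbol $\delta$, which the paper reserves for the H\"older exponent in Hypothesis~\ref{ipos-1}(i).
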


\begin{proof}
As it has been already remarked, the uniqueness part is a
straightforward consequence of Proposition \ref{maximum}.

To prove the existence part of the statement, we first consider the
case when $f$ is nonnegative. For any $n\in\N$, let us consider the
Cauchy-Dirichlet problem
\begin{equation}
\left\{
\begin{array}{lll}
D_tv(t,x)={\mathscr A}v(t,x), & t\in (s,T], & x\in B(0,n),\\[1.5mm]
v(t,x)=0, & t\in (s,T], & x\in \partial B(0,n),\\[1.5mm]
v(s,x)=f(x), && x\in B(0,n).
\end{array}
\right. \label{pb-omog-approx}
\end{equation}
By classical results (see e.g., \cite[Theorem 3.5]{friedman}) this
problem admits a unique bounded solution $u_n\in C^{1,2}((s,T]\times
B(0,n))$ which is continuous in $[s,T]\times
\overline{B(0,n)}\setminus\{(s,x): x\in\partial B(0,n)\}$. Moreover,
the classical maximum principle shows that
\begin{equation}
|u_n(t,x)|\le e^{c_0(t-s)}\|f\|_{\infty},\qquad\;\,(t,x)\in
[s,T]\times \overline{B(0,n)} \label{stima-approx}
\end{equation}
and that the sequence $u_n(t,x)$ is increasing for any fixed
$(t,x)$. Classical interior Schauder
estimates imply that the sequence $(u_n)$ is bounded in
$C^{1+\delta/2,2+\delta}(D)$ for any compact set $D\subset
(s,T]\times \R^N$. Here, $\delta$ is the same number as in
Hypothesis \ref{ipos-1}(i). The Ascoli-Arzel\`a theorem implies that
$u_n$ converges in $C^{1,2}(D)$, for any $D$ as above, to a function
$u$ which belongs to $C^{1+\delta/2,2+\delta}_{\rm
loc}((s,T)\times\R^N)$ and solves the differential equation in
\eqref{pb-omog-approx}.

Showing that $u$ is continuous up to $t=s$ and $u(s,\cdot)=f$ is a
bit more tricky. It is straightforward if $f\in
C^{2+\delta}_c(\R^N)$ since, in this case, the classical Schauder
estimates show that $(u_n)$ is bounded in
$C^{1+\delta/2,2+\delta}(D)$ for any compact set $D\subset
[s,T]\times\R^N$. Hence, $u_n$ converges to $u$ uniformly in
$[s,T]\times K$ for any compact set $K\subset\R^N$, so that $u$ is
continuous up to $t=s$ and it therein equals the function $f$.
Taking the limit as $n\to +\infty$ in \eqref{stima-approx}, estimate
\eqref{estim-u-om} follows immediately. Using this estimate, it is
then easy to show that $u$ is continuous up to $t=s$ and therein
equals the function $f$ also in the case when $f\in C_c(\R^N)$.

In the general case when $f\in C_b(\R^N)$, continuity up to $t=s$ can be obtained by a
localization argument. Let us fix $x_0\in\R^N$ and let
$\eta:\R^N\to\R$ be a continuous function such that
$\1_{B(x_0,1)}\le\eta\le\1_{B(x_0,2)}$. Writing $f=f\eta+f(1-\eta)$,
we split $u_n$ into the sum of the functions $v_n$ and $w_n$
which are, respectively, the solutions to \eqref{pb-omog-approx}
with initial data $\eta f$ and $(1-\eta)f$. Since $\eta f$ is compactly
supported in $\R^N$, $v_n$ converges to the solution
$v$ to problem \eqref{pb-omog} with $f$ being replaced by $\eta f$.
On the other hand, the classical maximum principle shows that
\begin{eqnarray*}
|w_n(t,x)|\le K(1-z_n(t,x)),\qquad\;\,(t,x)\in
[s,T]\times\overline{B(0,n)},
\end{eqnarray*}
for any $n\in\N$. Here, $K=\|f\|_{\infty}$ and $z_n$ denotes the solution to problem \eqref{pb-omog-approx}, with $f$
being replaced with the function $\eta$. Since the function
$v_n+w_n$ converges to $u$ and $z_n$ converges to
the solution  to problem \eqref{pb-omog} as $n\to +\infty$, it follows that
\begin{eqnarray*}
|u(t,x)-f(x)|\le |v(t,x)-f(x)|+K(1-z(t,x)).
\end{eqnarray*}
Letting $(t,x)\to (s,x_0)$, one easily obtains that $u(t,x)-f(x)$
tends to $0$. Hence, $u$ is continuous at $(t,x)=(s,x_0)$, where it
equals $f(x_0)$. This completes the proof for nonnegative data $f$.

For a general $f\in C_b(\R^N)$, the proof can be obtained splitting
$f=f^+-f^-$, where $f^+=\max\{f,0\}$, $f^{-}=(-f)^+$. Clearly, the
solution to problem \eqref{pb-omog} will be given by $u_+-u_-$,
where $u_+$ and $u_+$ are the solutions to problem \eqref{pb-omog},
with $f$ being replaced, respectively, by $f^+$ and $f^-$.
\end{proof}

In the rest of the paper, for any $f\in C_b(\R^N)$, we denote by
$G(t,s)f$ the value at time $t$ of the unique bounded classical solution $u$ to
problem \eqref{pb-omog}.

\subsection{Uniform estimates} \label{subsect-2.2}
This subsection is devoted to the proof of the following theorem.

\begin{theorem}\label{C0-C3}
Let Hypotheses $\ref{ipos-1}$ be satisfied.
Then, for any $\alpha,\beta\in [0,3]$, with $\alpha\le\beta$, there exists a positive constant
$C=C(\alpha,\beta)$ such that
\begin{equation}
\|G(t,s)f\|_{C^{\beta}_b(\R^N)}\le C(t-s)^{-\frac{\beta-\alpha}{2}}\|f\|_{C^{\alpha}_b(\R^N)},
\qquad\;\,f\in C_b^{\alpha}(\R^N),
\label{stimasem}
\end{equation}
for any $t\in (s,T]$.
\end{theorem}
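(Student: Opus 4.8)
The plan is to establish the estimate \eqref{stimasem} first for integer pairs $(\alpha,\beta)\in\{0,1,2,3\}^2$ with $\alpha\le\beta$ by a Bernstein-type argument, and then to fill in all the intermediate (non-integer) exponents by real interpolation. I would start from the evolution operator $G(t,s)$ provided by Theorem \ref{thm-3.2} and the sup-norm contraction \eqref{estim-u-om}, which is precisely the case $\alpha=\beta=0$. The core of the work is to produce, for $f\in C_b^k(\R^N)$ with $k=1,2,3$, the a priori bounds $\|D^h G(t,s)f\|_\infty\le C(t-s)^{-(h-j)/2}\|f\|_{C^j_b}$ for $0\le j\le h\le 3$. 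These follow by applying the differential operator $\mathscr A$ (and its commutators with $D_i$) to $u=G(t,\cdot)f$, writing down parabolic equations satisfied by the quantities
\[
\varphi_1=\sum_{|\beta|\le 1}|D^\beta u|^2,\qquad
\varphi_2=\sum_{|\beta|\le 2}|D^\beta u|^2,\qquad
\varphi_3=\sum_{|\beta|\le 3}|D^\beta u|^2,
\]
possibly with time-dependent weights like $(t-s)^{m}$ attached to the higher-order terms, and then applying the maximum principle of Proposition \ref{maximum} to suitable combinations. The algebraic and growth conditions in Hypotheses \ref{ipos-1}(iii), (v), (vi)---in particular the crucial inequality \eqref{cond-funz} relating $d$, $r$, $\varrho^2$ to $\nu$, and the bounds $|D^\beta q_{ij}|\le K_{|\beta|}\nu$ and the sign condition on $\sum D_{lm}q_{hk}\xi_{hk}\xi_{lm}$---are exactly what is needed to absorb the bad zeroth-order terms arising when $\mathscr A$ hits products of derivatives, so that the differential inequality $D_t w-\mathscr A w\le 0$ holds for the weighted energy $w$.

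Concretely, I would argue by induction on the order. For the first derivatives: set $u=G(t,\cdot)f$, compute that $v:=D_k u$ solves $D_t v=\mathscr A v + (D_k\mathscr A)u$ where $(D_k\mathscr A)u$ involves $D_kq_{ij}D_{ij}u$, $D_kb_jD_ju$, $D_kc\,u$; then $\varphi=|\nabla u|^2+M(t-s)^{-1}$-type corrections, or rather $|\nabla u|^2$ itself together with a multiple of $u^2$, satisfies $D_t\varphi\le\mathscr A\varphi$ once the dissipativity \eqref{dissip} and the bounds \eqref{cond-deriv-b}, \eqref{cond-deriv-c}, \eqref{cond-funz} are used to dominate the gradient-squared source terms by $\nu|\nabla u|^2$ times the ellipticity, which is controlled by the good negative term $-\nu|\nabla(\nabla u)|^2$ coming from $\mathscr A|\nabla u|^2$. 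The maximum principle then gives $\|\nabla u(t,\cdot)\|_\infty\le C\|f\|_{C^1_b}$ for bounded-in-time estimates, and inserting the weight $(t-s)^{1/2}$ (for the $j=0$ case) upgrades this to the singular estimate $\|\nabla u(t,\cdot)\|_\infty\le C(t-s)^{-1/2}\|f\|_\infty$. The second- and third-order cases repeat this scheme, now using the second- and third-derivative bounds on the coefficients in Hypotheses \ref{ipos-1}(v)-(vi); the need to approximate $C_b$ data by smoother data (as already done in the proof of Theorem \ref{thm-3.2}) lets one justify that these a priori bounds pass to the limit. Once the integer estimates $\|G(t,s)f\|_{C^j_b}\le C(t-s)^{-(j-i)/2}\|f\|_{C^i_b}$, $0\le i\le j\le 3$, are in hand, the general estimate \eqref{stimasem} with $\alpha,\beta\in[0,3]$ follows from the reiteration/interpolation theorem, since $C^\theta_b(\R^N)=(C_b(\R^N),C^k_b(\R^N))_{\theta/k,\infty}$ for $\theta\in(0,k)$ not an integer (and the endpoint integer cases are already proved), applying interpolation in both the source and target spaces of the bounded linear operator $G(t,s)$.

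The main obstacle I anticipate is the third-order estimate. At order three the commutator $[\mathscr A, D_iD_jD_k]$ produces terms of the form $D^3q\cdot D^2u$, which are only controlled pointwise by $\varrho$- or $K_3$-type quantities times lower-order derivatives; controlling the cross term $D^3q\cdot D^2u\cdot D^3u$ appearing in $D_t|D^3u|^2$ requires carefully balancing it against both the dissipative term $-\nu|D^4u|^2$ (via Cauchy-Schwarz, losing a factor that must be absorbed by $L_2\varrho^2\le L_3\nu$ and the analogous second-derivative bound on $q$) and a large multiple of $(t-s)^{\text{appropriate power}}|D^2u|^2$, whose own evolution must already have been tamed. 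Getting the hierarchy of constants and the powers of $(t-s)$ in the weighted energy $w=\sum_{m} c_m (t-s)^{?}\varphi_m$ to close simultaneously---so that $D_t w-\mathscr A w\le 0$ holds on all of $(s,T]\times\R^N$ with no leftover positive terms---is the delicate bookkeeping at the heart of the proof. A secondary technical point is ensuring that the weighted energies belong to $C_b([s,T]\times\R^N)\cap C^{1,2}((s,T]\times\R^N)$, with the correct behavior as $t\downarrow s$, so that Proposition \ref{maximum} genuinely applies; this is handled by first proving everything for smooth compactly supported $f$ and then passing to the limit using the already-established continuity and density arguments from Theorem \ref{thm-3.2}.
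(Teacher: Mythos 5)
Your overall strategy matches the paper's exactly: obtain the integer-exponent estimates by a Bernstein-type maximum-principle argument on a weighted sum of squared derivatives with time weights $(t-s)^j$, then extend to all real $\alpha\le\beta$ in $[0,3]$ by interpolation between $C_b$ and $C^3_b$ (plus reiteration). The paper indeed treats the extremal case $\alpha=0,\beta=3$ first and then handles the other integer pairs with the weight function $w=\sum_{j\le\beta}a^j(t-s)^{(j-\alpha)^+}\eta^{2j}|D^j u_n|^2$, which is the structure you describe.

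There is, however, one point you dismiss as a ``secondary technical point'' that is actually where the circularity must be broken, and where your plan as stated would not close. To apply Proposition~\ref{maximum} on $[s,T]\times\R^N$ you need the weighted energy to be \emph{bounded} there, i.e.\ you need global $L^\infty$ control of $D^ju$ up to order $3$ --- which is precisely what you are trying to prove. Approximating $f$ by smooth compactly supported data does not resolve this, since the coefficients are unbounded and the Dirichlet-approximate solutions $u_n$ of \eqref{pb-omog-approx} are defined only on $B(0,n)$. The paper's actual workaround is to build a spatial cutoff $\eta$ (with $\1_{B(0,n/2)}\le\eta\le\1_{B(0,n)}$) \emph{inside} the energy, i.e.\ $v_n=|u_n|^2+a(t-s)\eta^2|\nabla u_n|^2+a^2(t-s)^2\eta^4|D^2u_n|^2+a^3(t-s)^3\eta^6|D^3u_n|^2$, and to apply the maximum principle on the bounded cylinder $[s,T]\times\overline{B(0,n)}$ where $v_n$ is continuous and vanishes on the lateral boundary. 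This localization costs additional terms ($g_{3,n}$ and $g_{4,n}$ in the paper's computation) involving ${\mathscr Q}(\eta,\eta)$, ${\mathscr A}\eta$ and ${\mathscr Q}(\eta,D^ku_n)$, and it is exactly conditions \eqref{cond:qij-1}--\eqref{cond:b} in Hypothesis~\ref{ipos-1}(iii) that make these terms controllable by $\nu$ uniformly in $n$; that is the true role of those hypotheses, which your sketch does not assign them. Once you add the $\eta$-weights inside your $\varphi_m$, your differential-inequality bookkeeping and the passage to the limit $n\to+\infty$ go through exactly as you describe, in agreement with the paper.
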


The proof will be obtained in two steps. In the first one we will
prove \eqref{stimasem} when $\alpha,\beta\in\N$. Then, using an
interpolation argument we extend \eqref{stimasem} to any
$\alpha,\beta$ as in the statement of the theorem.

Let us introduce a few more notation. We denote by ${\mathscr Q}$
and ${\mathscr B}$ the operators defined on functions $f,g\in
C^{0,1}([0,T]\times\R^N)$ by
\begin{eqnarray*}
{\mathscr Q}(f,g)=\langle Q\nabla_xf,\nabla_xg\rangle,\qquad
{\mathscr B}(f,g)=\langle Db\nabla_xf,\nabla_xg\rangle.
\end{eqnarray*}

\subsubsection{\bf The case when $\alpha,\beta\in\N$} \label{subsect-3.1}
We first consider the case when $\alpha=0$, $\beta=3$. For any
$n\in\N$, let $\eta:\R^N\to\R$ be the radial function defined by
$\eta(x)=\psi(|x|/n)$ for any $x\in\R^N$, where $\psi$ is a smooth
nonincreasing function such that $\1_{[0,1/2]}\le
\psi\le\1_{[0,1]}$. We fix $s\in (0,T)$, and define the
function
\begin{eqnarray*}
v_n(t,x)&\hs{5}=\hs{5}&|u_n(t,x)|^2+a(t-s)\eta^2|\nabla_xu_n(t,x)|^2+a^2(t-s)^2\eta^4|D^2_xu_n(t,x)|^2\\
&\hs{5}&+a^3(t-s)^3\eta^6|D^3_xu_n(t,x)|^2,
\end{eqnarray*}
for any $t\in (s,T]$ and any $x\in B(0,n)$, where $u_n$ is the
(unique) classical solution of the Dirichlet Cauchy problem
\begin{eqnarray*}
\left\{
\begin{array}{lll}
D_tu(t,x)={\mathscr A}u(t,x), & t\in [s,T], & x\in B(0,n),\\[1.5mm]
u(t,x)=0, & t\in [s,T], & x\in \partial B(0,n),\\[1.5mm]
u(s,x)=\eta(x)f(x), && x\in\overline{B(0,n)}.
\end{array}
\right.
\end{eqnarray*}

By classical results (see e.g., \cite{friedman}), the function $v_n$
belongs to $C^{1,2}((s,T)\times B(0,n))$. Moreover, it can be
extended by continuity up to $t=s$ setting $v_n(s,\cdot)=|\eta
f|^2$.

A long but straightforward computation shows that the function $v_n$
solves the Cauchy problem
\begin{eqnarray*}
\left\{
\begin{array}{lll}
D_tv_n(t,x)=\mathcal{A}v_n(t,x)+g_n(t,x),\quad &t\in [s,T], &x\in B(0,n),\\[1.5mm]
v_n(t,x)=0, &t\in [s,T], &x\in\partial B(0,n),\\[1.5mm]
v_n(s,x)=(\eta f)^2(x), &&x\in\overline{B(0,n)},
\end{array}
\right.
\end{eqnarray*}
where $g_n=\sum_{i=1}^9g_{i,n}$, with
\begin{align*}
g_{1,n}=&-2{\mathscr Q}(u_n,u_n)
\displaystyle-2a(t-s)\eta^2\sum_{i=1}^N{\mathscr Q}(D_iu_n,D_iu_n)\nonumber\\
&-2a^2 (t-s)^2\eta^4\sum_{i,j=1}^N{\mathscr Q}(D_{ij}u_n,D_{ij}u_n)\nonumber\\
&-2a^3(t-s)^3\eta^6\sum_{i,j,h=1}^N {\mathscr Q}
(D_{ijh}u_n,D_{ijh}u_n),
\end{align*}
\begin{align*}
g_{2,n}=& 2a(t-s)\eta^2{\mathscr B}(u_n,u_n)
+4a^2 (t-s)^2\eta^4\sum_{i=1}^N{\mathscr B}(D_iu_n,D_iu_n)\nonumber\\
& +6a^3(t-s)^3\eta^6\sum_{i,j=1}^N{\mathscr B}(D_{ij}u_n,D_{ij}u_n),
\end{align*}
\[
g_{3,n}=-2a(t-s)\big (|\nabla_xu_n|^2+6a
(t-s)\eta^2|D^2_xu_n|^2+15a^2(t-s)^2\eta^4|D^3_xu_n|^2\big
){\mathscr Q}(\eta,\eta),
\]
\begin{align*}
g_{4,n}=& -2({\mathscr A}\eta-c\eta)\big \{a(t-s)\eta|\nabla_xu_n|^2
+2a^2(t-s)^2\eta^3|D^2_xu_n|^2\nonumber\\
&\qquad\qquad\qquad\quad+3a^3(t-s)^3\eta^5|D^3_xu_n|^2\big \}\nonumber\\
&-8a(t-s)\eta\sum_{i=1}^N{\mathscr Q}(\eta,D_iu_n)D_iu_n
-16a^2 (t-s)^2\eta^3\sum_{i,j=1}^N{\mathscr Q}(\eta,D_{ij}u_n)D_{ij}u_n\nonumber\\
&-24a^3(t-s)^3\eta^5\sum_{i,j,h=1}^N{\mathscr
Q}(\eta,D_{ijh}u_n)D_{ijh}u_n,
\end{align*}
\begin{align*}
g_{5,n}=& 2a(t-s)\eta^2\sum_{i,j,h=1}^N D_hq_{ij}D_hu_nD_{ij}u_n\nonumber\\
&+4a^2 (t-s)^2\eta^4\sum_{i,j,h,k=1}^N D_hq_{ij}D_{hk}u_nD_{ijk}u_n\nonumber\\
& +6a^3(t-s)^3\eta^6\sum_{i,j,h,k,l=1}^ND_hq_{ij}D_{hkl}u_nD_{ijkl}u_n,
\end{align*}
\begin{align*}
g_{6,n}=& 2a^2 (t-s)^2\eta^4\sum_{i,j,h,k=1}^N D_{hk}q_{ij}D_{ij}u_nD_{hk}u_n\nonumber\\
&+6a^3(t-s)^3\eta^6\sum_{i,j,h,k,l=1}^N D_{hk}q_{ij}D_{ijl}u_nD_{hkl}u_n\nonumber\\
& +2a^2 (t-s)^2\eta^4\sum_{i,j,h=1}^N D_{jh}b_iD_iu_nD_{jh}u_n\nonumber\\
&+6a^3(t-s)^3\eta^6\sum_{i,j,h,k=1}^N D_{jh}b_iD_{ik}u_nD_{jhk}u_n,
\end{align*}
\begin{align*}
g_{7,n}=\,&2a^3(t-s)^3\eta^6\sum_{i,j,h,k,l=1}^N
D_{hkl}q_{ij}D_{ij}u_nD_{hkl}u_n\\
& +2a^3(t-s)^3\eta^6\sum_{i,j,h,k=1}^N D_{jhk}b_iD_iu_nD_{jhk}u_n,
\end{align*}
\begin{eqnarray*}
g_{8,n}=a\eta^2|\nabla_xu_n|^2+2a^2(t-s)\eta^4|D^2_xu_n|^2+3a^3(t-s)^2\eta^6|D^3_xu_n|^2,
\end{eqnarray*}
\begin{align*}
g_{9,n}=\,&2cv_n+2a(t-s)\eta^2 u_n\langle
\nabla_xc,\nabla_xu_n\rangle
+4a^2(t-s)^2\eta^4\langle D^2_xu_n\nabla_x c,\nabla_xu_n\rangle\nonumber\\
&+2a^2(t-s)^2\eta^4u_n{\rm Tr}(D^2_xc\,D^2_xu_n)
+2a^3(t-s)^3\eta^6u_n\sum_{i,j,h=1}^ND_{ijh}cD_{ijh}u_n\nonumber\\
&+6a^3(t-s)^3\eta^6\bigg (\sum_{i,j,h=1}^ND_icD_{jh}u_nD_{ijh}u_n+
\sum_{i,j,h=1}^ND_{jh}cD_iu_nD_{ijh}u_n\bigg ).
\end{align*}
Taking Hypothesis \ref{ipos-1}(ii) into account, we easily deduce
that
\begin{align}
g_{1,n}\le& -2\nu |\nabla_xu_n|^2-2a(t-s)\eta^2\nu|D^2_xu_n|^2-2a^2(t-s)^2\eta^4\nu|D^3_xu_n|^2\nonumber\\
&-2a^3(t-s)^3\eta^6\nu|D^4_xu_n|^2.
\label{stimasug1}
\end{align}

As far as the function $g_{2,n}$ is concerned, we observe that
condition \eqref{dissip} implies that ${\mathscr B}(\zeta,\zeta)\le
d|\nabla\zeta|^2$ for any $\zeta\in C^1(\R^N)$. Hence, we can
estimate
\begin{equation}
g_{2,n}\le 2a(t-s)d\eta^2
|\nabla_xu_n|^2+4a^2(t-s)^2d\eta^4|D^2_xu_n|^2+6a^3(t-s)^3d\eta^6|D^3_xu_n|^2.
\label{stimasug4}
\end{equation}

The function $g_{3,n}$ can be estimated trivially from above by
zero. So, let us consider the function $g_{4,n}$. Using conditions
\eqref{cond:qij-1}, \eqref{cond:qij-2} and \eqref{cond:b} and
recalling that $\nabla\eta$ and $D^2\eta$ identically vanish in
$B(0,n/2)$ and in $\R^N\setminus B(0,n)$, it is not difficult to
check that
\begin{align*}
&|{\rm Tr}(Q(t,x)D^2_x\eta(x))|\le \frac{1+n^2}{n^3}\left
(2C_1\|\psi''\|_{\infty}
+2C_2\|\psi'\|_{\infty}n+2C_1\|\psi'\|_{\infty}\right
)\nu(t,x),\\
&\langle b(t,x),\nabla_x\eta\rangle\ge -
2C_3\|\psi'\|_{\infty}\frac{1+n^2}{n^2}\nu(t,x),
\end{align*}
for any $(t,x)\in [0,T]\times\R^N$. Hence, for $n$ sufficiently
large, it holds that
\begin{equation}
{\mathscr A}\eta-c\eta\ge -C'\nu:=-4\left
((C_1+C_2+C_3)\|\psi'\|_{\infty}+C_1\|\psi''\|_{\infty}\right )\nu.
\label{1}
\end{equation}
Arguing similarly, we can estimate
\begin{equation}
|{\mathscr Q}(\eta,\zeta)|\le 4C_1\|\psi'\|_{\infty}\nu
|\nabla_x\zeta|:=C''\nu |\nabla_x\zeta|, \label{2}
\end{equation}
for any function $\zeta\in C^1(\R^N)$. Using \eqref{1} and
\eqref{2}, we now get easily that
\begin{align*}
g_{4,n}\le & 2aC'(t-s)\nu\eta
|\nabla_xu_n|^2+4a^2C'(t-s)^2\nu\eta^3 |D^2_xu_n|^2\nonumber\\
&+6a^3C'(t-s)^3\nu\eta^5|D^3_xu_n|^2+8aC''(t-s)\eta\nu|\nabla_xu_n||D^2_xu_n|\nonumber\\
&+16a^2C''(t-s)^2\eta^3\nu|D^2_xu_n||D^3_xu_n|\nonumber+24a^3C''(t-s)^3\eta^5\nu|D^3_xu_n||D^4_xu_n|.
\end{align*}
Using Young inequality we can estimate
\begin{align*}
&\eta|\nabla_xu_n||D^2_xu_n|\le \varepsilon\eta^2
|D^2_xu_n|^2+\frac{1}{4\varepsilon}|\nabla_xu_n|^2,\\[1.5mm]
&\eta^3|D^2_xu_n||D^3_xu_n|\le \varepsilon
\eta^4|D^3_xu_n|^2+\frac{1}{4\varepsilon}\eta^2|D^2_xu_n|^2,\\[1.5mm]
&\eta^5|D^3_xu_n||D^4_xu_n|\le \varepsilon
\eta^6|D^4_xu_n|^2+\frac{1}{4\varepsilon}\eta^4|D^3_xu_n|^2,
\end{align*}
for any $\varepsilon>0$. Hence,
\begin{align}
g_{4,n}\le & 2a\left (C'+\frac{C''}{\varepsilon}\right )T\nu|\nabla_xu_n|^2\nonumber\\
&+4a\left (aC'T+2C''\varepsilon+\frac{C''}{\varepsilon}aT\right )(t-s)\nu\eta^2|D^2_xu_n|^2\nonumber\\
&+ 2a^2\left (3aC'T+8\varepsilon C''+3\frac{C''}{\varepsilon}aT\right )(t-s)^2\nu\eta^4|D^3_xu_n|^2\nonumber\\
&+24a^3C''\varepsilon(t-s)^3\nu\eta^6|D^4_xu_n|^2. \label{stimasug3}
\end{align}

The terms $g_{5,n}$, $g_{6,n}$ and $g_{7,n}$ can be estimated in a
similar way. Hypotheses \ref{ipos-1}(v) and \ref{ipos-1}(vi) imply
that
\begin{align*}
&\Bigg |\sum_{i,j,h=1}^ND_hq_{ij}D_{ij}\zeta D_h\zeta\Bigg |\le
NK_1\nu|\nabla\zeta||D^2\zeta|,\\[1.5mm]
&\sum_{i,j,h,k=1}^ND_{hk}q_{ij}D_{ij}\zeta D_{hk}\zeta\le
K_2\nu|D^2\zeta|^2,\\[1.5mm]
&\Bigg |\sum_{i,j,h,k,l=1}^ND_{hkl}q_{ij}D_{ij}\zeta
D_{hkl}\zeta\Bigg |\le
NK_3\nu|D^2\zeta||D^3\zeta|,\\[1.5mm]
&\Bigg |\sum_{j,h,k=1}^ND_{hk}b_jD_j\zeta D_{hk}\zeta\Bigg |\le
rN^{\frac{1}{2}}|\nabla\zeta||D^2\zeta|,\\[1.5mm]
&\Bigg |\sum_{j,h,k,l=1}^ND_{hkl}b_jD_j\zeta D_{hkl}\zeta\Bigg |\le
rN^{\frac{1}{2}}|\nabla\zeta||D^3\zeta|,
\end{align*}
for any smooth function $\zeta$. Hence,
\begin{align}
g_{5,n}\le &aTK_1\frac{N^2}{2\varepsilon}\nu|\nabla_xu_n|^2+
aK_1\left (2\varepsilon
+aT\frac{N^2}{\varepsilon}\right )(t-s)\nu\eta^2|D^2_xu_n|^2\nonumber\\
&+a^2K_1\left (4\varepsilon +3aT\frac{N^2}{2\varepsilon}\right
)(t-s)^2\nu\eta^4 |D^3_xu_n|^2+6a^3(t-s)^3\varepsilon K_1\nu\eta^6
|D^4_xu_n|^2, \label{stimasug5}
\end{align}
\begin{align}
g_{6,n}\le &a^2(t-s)\eta^2r\frac{N}{2\varepsilon}T|\nabla_xu_n|^2
+a^2(t-s)^2\eta^4\left [2K_2\nu\hs{1}+\hs{2}\left (2\varepsilon
+3a\frac{N}{2\varepsilon}T\right )r\right ]
|D^2_xu_n|^2\nonumber\\
&+6a^3(t-s)^3\eta^6\left (K_2\nu+\varepsilon r\right )|D^3_xu_n|^2,
\label{stimasug6}
\end{align}
\begin{align}
g_{7,n}\le
&a^3(t-s)T^2\eta^2r\frac{N}{2\varepsilon}|\nabla_xu_n|^2
+a^3(t-s)T^2\eta^4K_3\nu\frac{N^2}{2\varepsilon}|D^2_xu_n|^2\nonumber\\
&+2a^3(t-s)^3\eta^6\varepsilon \left (K_3\nu + r\right
)|D^3_xu_n|^2. \label{stimasug7}
\end{align}

The function $g_{8,n}$ can be estimated as follows:
\begin{equation}
g_{8,n}\le a|\nabla_xu_n|^2+2a^2(t-s)\eta^2|D^2_xu_n|^2+3a^3(t-s)^2\eta^4|D^3_xu_n|^2.
\label{stimasug8}
\end{equation}

Finally, taking Hypotheses \ref{ipos-1}(iv) and \ref{ipos-1}(vi) into account, we can
estimate
\begin{align*}
g_{9,n}\le\,& 2c_0v_n+T(1+T+T^2)u_n^2+
a^{\frac{3}{2}}(t-s)\left (2T\varrho_0^{-1}+\sqrt{a}+3T^2\varrho_0^{-1}a\right )\varrho^2\eta^2|\nabla_xu_n|^2\\
&+a^{\frac{5}{2}}(t-s)^2\left (2\varrho_0^{-1}+3T\varrho^{-1}_0+a^{\frac{3}{2}}\right )\eta^4\varrho^2|D^2_xu_n|^2\\
&+a^{\frac{7}{2}}(t-s)^3\left (6\varrho_0^{-1}+a^{\frac{5}{2}}\right
)\eta^6\varrho^2 |D^3_xu_n|^2.
\end{align*}

From \eqref{stimasug1}, \eqref{stimasug3}-\eqref{stimasug7} we
obtain, for any $t\in [0,T]$,
\begin{align}
g_n\le &\bigg\{-\nu_0+a +\nu\left [-1+aT\left
(2C'+2\frac{C''}{\varepsilon}+K_1\frac{N^2}{2\varepsilon}\right )
\right ]\nonumber\\
&\;\;\;+a(t-s)\eta^2\bigg [2d+aT(1+aT)\frac{N}{2\varepsilon}r\\
&\qquad\qquad\qquad\quad\;\,+\sqrt{a}\left
(2T\varrho_0^{-1}+\sqrt{a}(1+3T^2\varrho_0^{-1}\sqrt{a})\right
)\varrho^2
\bigg ]\bigg\}|\nabla_xu_n|^2\nonumber\\
&+ a\bigg \{-\nu_0+2a+\nu\bigg [-1+ 2\varepsilon
(4C''+K_1)+a^2T^2K_3\frac{N^2}{2\varepsilon}\nonumber\\
&\qquad\qquad\qquad\qquad\quad\;\,+aT\left
(4C'+4\frac{C''}{\varepsilon}+K_1\frac{N^2}{\varepsilon} +2K_2\right
)
\bigg ]\nonumber\\
&\;\;\;\;\;\;\;\;\;\;+a(t-s)\bigg [4d+\left
(2\varepsilon+3a\frac{N}{2\varepsilon}T
\right )r\nonumber\\
&\qquad\qquad\quad\;\;\;\;\;\;\;\;\;\;+\sqrt{a}\left
(2\varrho_0^{-1}+3T\varrho_0^{-1}+a^{\frac{3}{2}}\right )\varrho^2\bigg ]\eta^2\bigg \}(t-s)\eta^2|D^2_xu_n|^2\nonumber\\
&+ a^2\bigg\{-\nu_0+3a+\nu\bigg [-1+4\varepsilon(4C''+K_1) \nonumber\\
&\;\;\;\;\;\;\;\;\;\;\; +aT\left
(6C'+6\frac{C''}{\varepsilon}+3K_1\frac{N^2}{2\varepsilon}+2K_2(3+\varepsilon)\right
)\bigg ]\nonumber\\
&\;\;\;\;\;\;\;\;\;\;\;+a(t-s)\left (2(3d+4\varepsilon r)
+\sqrt{a}(6\varrho_0^{-1}+a^{\frac{5}{2}})\rho^2\right )\eta^2\bigg\}(t-s)^2\eta^4|D^3_xu_n|^2\nonumber\\
&+ 2a^3\left (-1+3\varepsilon(4C''+ K_1)\right )(t-s)^3\nu\eta^6
|D^4_xu_n|^2\nonumber\\
&+\left\{2c_0+T(1+T+T^2)\right\}v_n. \label{stimaC0-C3}
\end{align}
It is now easy to check that $\varepsilon$ and $a$ can be fixed sufficiently small such that all the terms in the
right-hand side of \eqref{stimaC0-C3}, but the last one, are
negative. We thus get
\begin{eqnarray*}
g_n(t,x)\le \big (2c_0+T(1+T+T^2)\big )v_n(t,x):=c_1v_n(t,x),
\end{eqnarray*}
for any $t\in [0,T]$ and any $x\in B(0,n)$. The maximum principle
now yields
\begin{equation}
|v_n(t,x)|\le e^{c_1(t-s)}\|\eta f\|_{\infty}^2\le
e^{c_1T}\|f\|_{\infty}^2,\qquad\;\,t\in (s,T],\; x\in B(0,n).
\label{conv-R-deriv}
\end{equation}
The proof of Theorem \ref{thm-3.2} shows that the function $u_n$
converges to $u$ in $C^{1,2}(D)$ for any compact set $D\subset
(s,T]\times\R^N$. We claim that $u$ is thrice continuously
differentiable with respect to the spatial variables in
$(s,T]\times\R^N$ and $D^3_xu_n$ converges to $D^3_xu$ as $n\to
+\infty$, locally uniformly in $(s,T]\times\R^N$. Indeed, since the
coefficients of the operator ${\mathscr A}$ are smooth, the interior
Schauder estimates imply that the first-order spatial derivatives of
the function $u_n$ are bounded in $C^{1+\delta/2,2+\delta}(D)$ for
any $D$ as above. Ascoli-Arzel\`a theorem now yields the claim.
Hence, taking the limit as $n\to+\infty$ in \eqref{conv-R-deriv},
estimate \eqref{stimasem} follows at once.

To prove \eqref{stimasem} in the other situations when $\alpha,\beta\in\N$ and $\alpha\le\beta$, it suffices to
apply the same arguments as above to the function
\begin{eqnarray*}
w(t,x)=\sum_{j=0}^{\beta}a^j(t-s)^{(j-\alpha)^{+}}\eta(x)^{2j}|D^ju_n(t,x)|^2,\quad\;\,t\in
(s,T],\;\, x\in B(0,n),
\end{eqnarray*}
where $(\cdot)^+$ denotes the positive part of the number in brackets.
\endproof

\subsubsection{\bf The case when $(\alpha,\beta)\notin\N\times\N$}

As it has been already claimed, to prove \eqref{stimasem} in the
general case we use an interpolation argument. It is well known
that, given four Banach spaces $X_1,X_2,Y_1,Y_2$, with $Y_i$
continuously embedded into $X_i$ ($i=1,2$), any linear operator $S$,
which is bounded from $X_1$ into $X_2$ and from $Y_1$ into $Y_2$, is
bounded from the interpolation space $(X_1,Y_1)_{\theta,\infty}$
into the interpolation space $(X_2,Y_2)_{\theta,\infty}$ for any
$\theta\in (0,1)$ and
\begin{equation}
\|S\|_{L((X_1,Y_1)_{\theta,\infty};(X_2,Y_2)_{\theta,\infty})}\le
\|S\|_{L(X_1,X_2)}^{1-\theta}\|S\|_{L(Y_1;Y_2)}^{\theta},
\label{stima-interp-gen}
\end{equation}
see e.g., \cite[pag. 25]{triebel}. We apply estimate
\eqref{stima-interp-gen} with $X_1=C_b(\R^N)$,
$X_2,Y_1,Y_2=C_b^3(\R^N)$ and $S=G(t,s)$. Since
$(C_b(\R^N),C^3_b(\R^N))_{\theta,\infty}=C^{3\theta}_b(\R^N)$ (for
any $\theta\in (0,1)$ such that $3\theta\notin\N$) and
$(C_b^3(\R^N),C^3_b(\R^N))_{\theta,\infty}=C^3_b(\R^N)$, with
equivalence of the corresponding norms (see e.g., \cite[Chapter 2,
Section 7, Theorem 1]{triebel}), from the results in Subsection
\ref{subsect-3.1}, we obtain \eqref{stimasem} with $\alpha\in (0,3)$
and $\beta=3$. A similar argument allows us to prove
\eqref{stimasem} also when $\alpha<\beta=1,2$ and $\alpha\notin\N$.

Now, we observe that the maximum principle yields
\begin{eqnarray*}
\|G(t,s)\|_{L(C_b(\R^N))}\le C,\qquad\;\, s\le t\le T,
\end{eqnarray*}
for some positive constant $C$. Hence, applying
\eqref{stima-interp-gen} with $X_1=X_2=C_b(\R^N)$ and
$Y_1=Y_2=C^3_b(\R^N)$, \eqref{stimasem} follows for any
$0\le\alpha=\beta\le 3$ such that $\alpha,\beta\notin\N$.

To prove \eqref{stimasem} in the general case, it now suffices to
fix $\alpha$ and $\beta\in [0,3]$, with $\alpha<\beta$,
$\alpha,\beta\notin\N$, and apply \eqref{stima-interp-gen} with
$X_1=X_2=Y_1=C^{\alpha}_b(\R^N)$, $Y_2=C^3_b(\R^N)$ and
$\theta=(3-\alpha)^{-1}(\beta-\alpha)$.

\begin{remark}
Let $I$ be a right-halfline and assume that Hypotheses \ref{ipos-1},
but \ref{ipos-1}(i), are satisfied with $[0,T]$ being replaced by
$I$. Further assume that
\begin{enumerate}
\item[{\rm (i')}]
the coefficients $q_{ij},b_j,c$ $(i,j=1,\ldots,N$) are thrice
continuously differentiable with respect to the spatial variables in
$I\times\R^N$ and their first-, second- and third-order spatial
derivatives are in $C^{\delta/2,\delta}(D)$ for some $\delta\in
(0,1)$ and any compact set $D\subset I\times\R^N$.
\end{enumerate}
Then, for any $\alpha,\beta\in [0,3]$, with $\alpha\le\beta$, there
exists a positive constant $C=C(\alpha,\beta)$ such that
\begin{equation}
\|G(t,s)f\|_{C^{\beta}_b(\R^N)}\le
C(t-s)^{-\frac{\beta-\alpha}{2}}e^{c_0(t-s)}\|f\|_{C^{\alpha}_b(\R^N)},
\qquad\;\,f\in C_b^{\alpha}(\R^N), \label{stimasem-bis}
\end{equation}
for any $s,t\in I$ with $s< t$. Here, $c_0$ is the constant in
Hypothesis \ref{ipos-1}(vii).

The proof of Theorem \ref{C0-C3} shows that
\begin{equation}
\|G(t,s)f\|_{C^{\beta}_b(\R^N)}\le
C_1(t-s)^{-\frac{\beta-\alpha}{2}}\|f\|_{C^{\alpha}_b(\R^N)},
\qquad\;\,f\in C_b^{\alpha}(\R^N), \label{stimasem-ter}
\end{equation}
for any $t,s\in I$ such that $0<t-s\le 1$ and some positive constant
$C_1$, independent of $s$ and $t$. On the other hand, if $t>s+1$, by
virtue of Proposition \ref{maximum} we can split
$G(t,s)f=G(t,t-1)G(t-1,s)f$. Hence, from \eqref{estim-u-om} and
\eqref{stimasem} with $\alpha=0$, $\beta=3$ we get
\begin{align}
\|G(t,s)f\|_{C^{\beta}_b(\R^N)}&=\|G(t,t-1)G(t-1,s)f\|_{C^{\beta}_b(\R^N)}\notag\\
& \le C_1\|G(t-1,s)f\|_{\infty}\notag\\
& \le C_1e^{c_0(t-s-1)}\|f\|_{\infty}\notag\\
& \le C_1e^{c_0(t-s-1)}\|f\|_{C^{\alpha}_b(\R^N)}.
\label{stimasem-quater}
\end{align}
Estimate \eqref{stimasem-bis} now follows immediately from
\eqref{stimasem-ter} and \eqref{stimasem-quater}.
\end{remark}

\subsection{Optimal Schauder estimates}
\label{subsect-2.3} Using the uniform estimates in Theorem
\ref{C0-C3}, we will prove an existence and uniqueness result for
the Cauchy problem
\begin{equation}
\left\{
\begin{array}{lll}
D_tu(t,x)=(\mathcal{A}u)(t,x)+g(t,x),\quad &t\in [0,T], &x\in\R^N,\\[1.5mm]
u(0,x)=f(x),&&x\in\R^N.
\end{array}
\right.
\label{nonom}
\end{equation}
as well as optimal Schauder estimates for its solution. For this
purpose, we introduce the following definition.
\begin{definition}
\label{defin-0} For any $\alpha\notin\N$, we denote by
$C^{0,\alpha}([0,T]\times\R^N)$ the set of all continuous functions
$f:[0,T]\times\R^N\to\R$ such that $f(t,\cdot)\in
C^{\alpha}_b(\R^N)$ for any $t\in [0,T]$ and
\begin{eqnarray*}
\|f\|_{C^{0,\alpha}([0,T]\times\R^N)}:=\sup_{t\in
[0,T]}\|f(t,\cdot)\|_{C^{\alpha}_b(\R^N)}<+\infty.
\end{eqnarray*}
\end{definition}

We state the main result of this first part of the paper.
\begin{theorem}
\label{thm-schauder} Suppose that Hypotheses $\ref{ipos-1}$ are
satisfied. Fix $\theta\in (0,1)$, $g\in
C^{0,\theta}([0,T]\times\R^N)$ and $f\in C^{2+\theta}_b(\R^N)$.
Then, problem \eqref{nonom} admits a unique bounded classical
solution. Moreover, $u(t,\cdot)\in C^{2+\theta}_b(\R^N)$ for any
$t\in [0,T]$ and there exists a positive constant $C$ such that
\begin{equation}
\|u\|_{C^{0,2+\theta}([0,T]\times\R^N)}\le C\left (
\|f\|_{C^{2+\theta}_b(\R^N)}+\|g\|_{C^{0,\theta}([0,T]\times\R^N)}\right
). \label{stima-teo-schauder}
\end{equation}
\end{theorem}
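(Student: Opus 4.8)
The strategy is the classical one: represent the candidate solution by the variation-of-constants (Duhamel) formula built from the evolution family $G(t,s)$ constructed in Theorem \ref{thm-3.2}, then read off the Schauder estimate from the uniform gradient bounds of Theorem \ref{C0-C3}. Concretely, the plan is to set
\begin{equation*}
u(t,x)=(G(t,0)f)(x)+\int_0^t (G(t,s)g(s,\cdot))(x)\,ds,\qquad t\in[0,T],\ x\in\R^N,
\end{equation*}
and to prove, in this order: (a) the integral is well defined and $u$ is bounded, with $\|u(t,\cdot)\|_\infty$ controlled via \eqref{estim-u-om}; (b) $u$ is a classical solution of \eqref{nonom}; (c) $u$ satisfies the Schauder bound \eqref{stima-teo-schauder}; (d) uniqueness. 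Step (d) is immediate from Proposition \ref{maximum} applied to the difference of two bounded classical solutions (the difference solves the homogeneous problem with zero initial datum, hence is $\le 0$, and likewise $\ge 0$).

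For the Schauder estimate (c), the point is to estimate $\|D^2_x u(t,\cdot)\|_\infty$ and $[D^2_x u(t,\cdot)]_{C^\theta_b}$. First I would split $u$ into the two summands above. The homogeneous part $G(t,0)f$ is handled directly by \eqref{stimasem} with $\alpha=\beta=2+\theta$, using that $G(t,s)$ is bounded on $C^{2+\theta}_b(\R^N)$ uniformly for $0\le t-s\le T$ (from \eqref{stimasem} with $\alpha=\beta$ when $\alpha\notin\N$, and from the semigroup law $G(t,0)=G(t,t/2)G(t/2,0)$ together with \eqref{estim-u-om} to cover small $t-s$; but here $\alpha=2+\theta\notin\N$ so \eqref{stimasem} applies as is). For the convolution term $v(t,\cdot):=\int_0^t G(t,s)g(s,\cdot)\,ds$, I would estimate $\|v(t,\cdot)\|_{C^{2+\theta}_b}$ by applying \eqref{stimasem} with $\alpha=\theta$, $\beta=2+\theta$ to each integrand, obtaining
\begin{equation*}
\|G(t,s)g(s,\cdot)\|_{C^{2+\theta}_b(\R^N)}\le C(t-s)^{-1}\|g(s,\cdot)\|_{C^\theta_b(\R^N)},
\end{equation*}
which is \emph{not} integrable in $s$ near $s=t$. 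This is the standard subtlety: one does not estimate the full $C^{2+\theta}_b$-norm under the integral. Instead one interpolates: bound $\|v(t,\cdot)\|_{C^{2+\theta}_b}$ by interpolating between $\|v(t,\cdot)\|_{C_b}$ (easy, from \eqref{estim-u-om}) and a H\"older seminorm bound of $D^2_x v$. The H\"older seminorm of $D^2_x v(t,\cdot)$ is controlled by the classical device: for $s\in(t/2,t)$ use $\|G(t,s)g(s,\cdot)\|_{C^{3}_b}\le C(t-s)^{-(3-\theta)/2}\|g(s,\cdot)\|_{C^\theta_b}$ from \eqref{stimasem} and the embedding $C^3_b\hookrightarrow C^{2+\theta}_b$ — wait, the exponent $(3-\theta)/2<1$, so this \emph{is} integrable. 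More cleanly: estimating $\|G(t,s)g(s,\cdot)\|_{C^\gamma_b}\le C(t-s)^{-(\gamma-\theta)/2}\|g(s,\cdot)\|_{C^\theta_b}$ is integrable in $s$ on $(0,t)$ precisely when $(\gamma-\theta)/2<1$, i.e.\ $\gamma<2+\theta$; so for any $\gamma<2+\theta$ we get $\sup_t\|v(t,\cdot)\|_{C^\gamma_b}\le C\|g\|_{C^{0,\theta}}$. To reach the endpoint $\gamma=2+\theta$ one uses the real interpolation characterization $(C^\theta_b,C^3_b)_{\vartheta,\infty}=C^{2+\theta}_b$ for the appropriate $\vartheta$ and a Marcinkiewicz-type argument on the convolution, exactly as in \cite{lunardi-sem}: split the integral at $s=t-\lambda$, estimate the near part in $C^\theta_b$ and the far part in $C^3_b$, optimize over $\lambda$. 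That is the abstract interpolation method alluded to in the introduction.

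For (b), that $u$ is a genuine classical solution: once we know $g\in C^{0,\theta}$ and $u(t,\cdot)\in C^{2+\theta}_b$ uniformly, the function $h:=g+{\mathscr A}u$ restricted to any compact $D\subset[0,T]\times\R^N$ lies in a parabolic H\"older class (using Hypothesis \ref{ipos-1}(i) on the coefficients and the spatial regularity of $u$), so interior parabolic Schauder theory (as in \cite{friedman}) upgrades $u$ to $C^{1+\delta/2,2+\delta}_{\rm loc}$; the Duhamel identity then shows $D_t u={\mathscr A}u+g$ pointwise, and the initial condition $u(0,\cdot)=f$ holds by continuity of $t\mapsto G(t,0)f$ up to $t=0$ (Theorem \ref{thm-3.2}) and $v(t,\cdot)\to 0$ as $t\to 0^+$. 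An alternative to this bootstrap is to approximate $g$ by smooth data $g_n$, solve exactly, and pass to the limit using the uniform estimates just proved together with a compactness argument — this is the scheme that will be reused in Section \ref{sect-4} for discontinuous coefficients.

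The main obstacle is the borderline integrability in step (c): the naive estimate of $\|G(t,s)g(s,\cdot)\|_{C^{2+\theta}_b}$ has a non-integrable singularity $(t-s)^{-1}$, and one must genuinely invoke the real-interpolation/Marcinkiewicz argument — estimating only seminorms under the integral and splitting the time interval — to close the estimate at the endpoint exponent $2+\theta$. Everything else (existence via Duhamel, interior regularity, uniqueness via the maximum principle) is routine given Theorems \ref{thm-3.2} and \ref{C0-C3}.
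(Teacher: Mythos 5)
Your proposal matches the paper's proof in all essentials: Duhamel representation $u=G(\cdot,0)f+v$, real-interpolation / $K$-functional argument on the convolution term $v$ by splitting the time integral at $t-\lambda$, and uniqueness via the maximum principle of Proposition \ref{maximum} (the paper interpolates between $C^{\alpha}_b(\R^N)$ and $C^{2+\alpha}_b(\R^N)$ for an auxiliary $\alpha\in(\theta,1)$ rather than between $C^{\theta}_b(\R^N)$ and $C^{3}_b(\R^N)$, but both endpoint pairs recover $C^{2+\theta}_b(\R^N)$ and close the estimate). One small slip worth flagging: you wrote that the exponent $(3-\theta)/2<1$, hence integrable, but since $\theta<1$ this exponent is strictly greater than $1$; fortunately your subsequent recomputation (integrability holds iff $\gamma<2+\theta$, so the endpoint $\gamma=2+\theta$ genuinely requires the interpolation step) is the correct one and is exactly the argument the paper uses.
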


\begin{proof}
The proof can be obtained repeating almost verbatim the arguments in
the proof of \cite[Theorem 2]{lunardi-studia} (see also
\cite[Chapter 5]{libro}). For the reader's convenience we sketch
it.

The uniqueness part of the assertion is an immediate consequence of
the maximum principle in Proposition \ref{maximum}.

As far the existence part and the optimal Schauder estimates are
concerned, we show that the solution to problem \eqref{nonom} is
given by the variation-of-constants formula
\begin{equation}
u(t,x)=(G(t,0)f)(x)+\int_0^t(G(t,r)g(r,\cdot))(x)dr,\qquad\;\,t\in
[0,T],\;x\in\R^N, \label{variat}
\end{equation}
as in the classical case of bounded coefficients. Of course, it is
enough to consider the convolution term in \eqref{variat} (which we
denote by $v$). The main step of the proof consists in showing that
$v$ belongs to $C^{0,2+\theta}([0,T]\times\R^N)$ and
\begin{equation}
\|v\|_{C^{0,2+\theta}([0,T]\times\R^N)}\le \tilde
C\|g\|_{C^{0,\theta}([0,T]\times\R^N)}, \label{stima-v}
\end{equation}
for some positive constant $\tilde C$, independent of $g$. Estimate
\eqref{stima-v} follows from the interpolation argument in
\cite{lunardi-sem}, based on the uniform estimates of Subsection
\ref{subsect-2.2}. For any $\xi\in (0,1)$, $v(t,\cdot)$ is split
into the sum $v(t,\cdot)=a_{\xi}(t,\cdot)+b_{\xi}(t,\cdot)$ where
\begin{eqnarray*}
a_{\xi}(t,x)=\left\{
\begin{array}{ll}
\displaystyle\int_{t-\xi}^t(G(t,r)g(r,\cdot))(x)dr,\quad &\xi\in [0,t),\\[3.5mm]
\displaystyle\int_{0}^t(G(t,r)g(r,\cdot))(x)dr,\quad &{\rm
otherwise},
\end{array}
\right.
\end{eqnarray*}
and
\begin{eqnarray*}
b_{\xi}(t,x)=\left\{
\begin{array}{ll}
\displaystyle\int_0^{t-\xi}(G(t,r)g(r,\cdot))(x)dr,\quad &\xi\in [0,t),\\[3.5mm]
0,\quad &{\rm otherwise}.
\end{array}
\right.
\end{eqnarray*}
The uniform estimates in Theorem \ref{C0-C3} can be used to check
that $a_{\xi}(t,\cdot)$ and $b_{\xi}(t,\cdot)$ belong respectively
to $C^{\alpha}_b(\R^N)$ and $C^{2+\alpha}_b(\R^N)$ for any
$\alpha\in (\theta,1)$. Moreover,
\begin{eqnarray*}
\|a_{\xi}(t,\cdot)\|_{C^{\alpha}_b(\R^N)}+
\xi\|b_{\xi}(t,\cdot)\|_{C^{2+\alpha}_b(\R^N)}\le
C_1\xi^{1-(\alpha-\theta)/2}\|g\|_{C^{0,\theta}([0,T]\times\R^N)},
\end{eqnarray*}
for some positive constant $C_1$, independent of $\xi$ and $g$. This
estimate shows that $v(t,\cdot)$ belongs to the interpolation space
$(C^{\alpha}_b(\R^N),C^{2+\alpha}_b(\R^N))_{1-(\alpha-\theta)/2,\infty}$
for any $t\in [0,T]$, and
\begin{eqnarray*}
\|v(t,\cdot)\|_{(C^{\alpha}_b(\R^N),C^{2+\alpha}_b(\R^N))_{1-(\alpha-\theta)/2,\infty}}
\le C_1\|g\|_{C^{0,\theta}([0,T]\times\R^N)},
\end{eqnarray*}
for any $t\in [0,T]$.
Since $(C^{\alpha}_b(\R^N),C^{2+\alpha}_b(\R^N))_{1-(\alpha-\theta)/2,\infty}=C^{2+\theta}_b(\R^N)$
with equivalence of the corresponding norms (see \cite[Chapter 2, Section 7, Theorem 1]{triebel}), estimate \eqref{stima-v} follows.

Estimate \eqref{stima-v} combined with the fact that $v$ is
continuous in $[0,T]\times\R^N$, shows that the spatial derivatives
of $u$, up to the second-order, are continuous in $[0,T]\times\R^N$.

To conclude the proof, one just needs to show that the function $v$
is differentiable with respect to time in $[0,T]\times\R^N$ and
$D_tv={\mathscr A}v+g$, but this is rather straightforward and
the proof is omitted.
\end{proof}

\begin{remark}
\label{rem-schauder}
\begin{enumerate}[\rm (i)]
\item
The proof of Theorem \ref{thm-schauder} shows that the constant $C$
in \eqref{stima-teo-schauder} only depends on the constants in
\eqref{stimasem} which, in their turn, only depend on the constants
$C_j$, $K_j$, $L_j$ ($j=1,2,3$) as well as the ellipticity constant
$\nu_0$ in Hypotheses \ref{ipos-1}.
\item
Formula \eqref{variat}, Theorem \ref{thm-3.2} and the proof of
Theorem \ref{thm-schauder} show that the assumption $f\in C_b(\R^d)$
is enough for problem \eqref{nonom} to have a bounded classical
solution.
\end{enumerate}
\end{remark}

\section{The case when the diffusion coefficients are only
measurable in the pair $(t,x)$} \label{sect-3}
\setcounter{equation}{0} In this section, we consider some situation
in which the diffusion coefficients are bounded but not continuous
in $[0,T]\times\R^N$.

To state our standing assumptions and, then, the main result of this section,
let us give the following
definition, which is the counterpart of Definition \ref{defin-0} in
this new setting.

\begin{definition}
Fix $\alpha>0$.
\begin{enumerate}[\rm (i)]
\item
$M^{0,\alpha}([0,T]\times\R^N)$ denotes the space of all measurable
functions $f:[0,T]\times\R^N\to\R$ such that $f(t,\cdot)\in
C^{\alpha}(B(0,R))$ for any $t\in [0,T]$ and any $R>0$, and the
supremum of the $C^{\alpha}(B(0,R))$-norms of $f(t,\cdot)$, when $t$
runs in $[0,T]$, is finite for any $R>0$. Note that it may blow up
as $R\to +\infty$.
\item
$B^{0,\alpha}([0,T]\times\R^N)$ denotes the subset of
$M^{0,\alpha}([0,T]\times\R^N)$ of all bounded functions
$f:[0,T]\times\R^N\to\R$ such that $f(t,\cdot)\in
C^{\alpha}_b(\R^N)$ for any $t\in [0,T]$ and the supremum of the
$C^{\alpha}_b$-norms of $f(t,\cdot)$, when $t$ runs in $[0,T]$, is
finite. We norm $B^{0,\alpha}([0,T]\times\R^N)$ by setting
\begin{eqnarray*}
\|f\|_{B^{0,\alpha}([0,T]\times\R^N)}:=\sup_{t\in
[0,T]}\|f(t,\cdot)\|_{C^{\alpha}_b(\R^N)}.
\end{eqnarray*}
\end{enumerate}
\end{definition}

\begin{ipos}
\label{ipos-2} ~
\par
\noindent
\begin{enumerate}[\rm (i)]
\item
the coefficients $q_{ij}=q_{ji}$, $b_j$ $(i,j=1,\ldots,N)$ and $c$ belong
to $M^{0,3+\delta}([0,T]\times\R^N)$ for some $\delta\in (0,1)$.
\item
Hypotheses $\ref{ipos-1}(ii)$ to $\ref{ipos-1}(vii)$
are satisfied with $[0,T]$ being replaced by
${\mathcal D}$, where $[0,T]\setminus {\mathcal D}$ is a negligible
set. Moreover, for any $x\in\R^N$, the functions
$d(\cdot,x)$, $r(\cdot,x)$ and $\varrho(\cdot,x)$ are bounded and measurable in
$(0,T)$.
\end{enumerate}
\end{ipos}

Since the coefficients of the operator ${\mathscr A}$ are not
continuous, we do not expect that the Cauchy problem
\begin{equation}
\left\{
\begin{array}{lll}
D_tu(t,x)=(\mathcal{A}u)(t,x)+g(t,x),\quad &t\in [0,T], &x\in\R^N,\\[1.5mm]
u(0,x)=f(x),&&x\in\R^N.
\end{array}
\right. \label{nonom-bdd}
\end{equation}
has a solution $u$ with the smoothness properties in Theorem
\ref{thm-schauder} even if the data $f$ and $g$ are smooth. In the
spirit of \cite{KP,Lo1,Lo2}, we give the following definition of
solution to problem \eqref{nonom-bdd}.

\begin{definition}
\label{defin-1} Let $f\in C^2_b(\R^N)$ and $g$ be a bounded and
measurable function such that $g(t,\cdot)$ is continuous in $\R^N$
for any $t\in [0,T]$. A bounded function $u:[0,T]\times\R^N\to\R$ is called
a solution to \eqref{nonom-bdd} if the following conditions are
satisfied:
\begin{enumerate}[\rm (i)]
\item
the function $u$ is Lipschitz continuous in
$[0,T]\times\overline{B(0,R)}$ for any $R>0$, its first- and
second-order space derivatives are bounded and continuous functions
in $[0,T]\times\R^N$;
\item
$u(0,x)=f(x)$ for any $x\in\R^N$;
\item
there exists a set $G\subset [0,T]\times\R^N$, with negligible
complement, such that $D_tu(t,x)={\mathscr A}u(t,x)+g(t,x)$ for any
$(t,x)\in G$. Moreover, for any $x\in\R^N$, the set $G_x=\{t\in
[0,T]: (t,x)\in G\}$ is measurable with measure $T$.
\end{enumerate}
\end{definition}

Let us now prove the following lemmas which play a fundamental role
in the proof of the main result of this section.

\begin{lemma}
\label{lemma-1}
Let $f\in M^{0,\theta}([0,T]\times\R^N)$ for some $\theta\in (0,1)$. Then, the following properties hold.
\begin{enumerate}[\rm (i)]
\item
The function $f(\cdot,x)$ is measurable for any $x\in\R^N$.
\item
There exists a measurable set $C\subset [0,T]$, whose complement is
negligible in $[0,T]$, such that the function
$F:[0,T]\times\R^N\to\R$, defined by
\begin{eqnarray*}
F(t,x)=\int_0^tf(s,x)ds,\qquad\;\,(t,x)\in [0,T]\times\R^N,
\end{eqnarray*}
is differentiable with respect to $t$ in $C\times\R^N$ and, therein,
$D_tF=f$.
\end{enumerate}
\end{lemma}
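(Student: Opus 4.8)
The plan is to handle the two assertions separately. For part (i), the key observation is that $f$ is, by hypothesis, continuous (indeed Hölder continuous) in the spatial variable on each ball $B(0,R)$, with a uniform-in-$t$ modulus of continuity coming from the boundedness of the $C^\theta(B(0,R))$-norms. Thus on a fixed ball $B(0,R)$, the function $f$ restricted to $[0,T]\times B(0,R)$ is the pointwise limit, as $k\to\infty$, of the functions obtained by freezing the spatial variable on a finite grid of mesh $1/k$ and interpolating — more concretely, one can write $f(\cdot,x)$ as a pointwise limit of sums $\sum_j f(\cdot,x_j)\varphi_j(x)$ over a partition of unity $\{\varphi_j\}$ subordinate to a mesh on $B(0,R)$, but for measurability in $t$ it is cleanest to argue directly: for each fixed $x$, pick a sequence of measurable step functions in $t$ approximating... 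Actually the crisp route is this. Fix $x_0\in\R^N$. Since $f:[0,T]\times\R^N\to\R$ is measurable (as a function of the pair, this is the standing assumption on the data $g$-type functions; here $f\in M^{0,\theta}$ is assumed measurable in $(t,x)$ by definition) and, for each $t$, $f(t,\cdot)$ is continuous, $f$ is a Carathéodory function; hence by the standard Scorza–Dragoni / Carathéodory argument $f(\cdot,x_0)$ is Lebesgue measurable. If one does not wish to invoke that, one writes $f(t,x_0)=\lim_{r\to 0^+}\frac{1}{|B(x_0,r)|}\int_{B(x_0,r)}f(t,y)\,dy$ (valid for every $t$ by spatial continuity), and each average $t\mapsto \int_{B(x_0,r)}f(t,y)\,dy$ is measurable by Fubini applied to the measurable function $f$ on $[0,T]\times B(x_0,r)$; a pointwise limit of measurable functions is measurable, giving (i).

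For part (ii), with (i) in hand the integral $F(t,x)=\int_0^t f(s,x)\,ds$ is well defined for every $(t,x)$, since $f(\cdot,x)\in L^\infty(0,T)$ (indeed $|f(s,x)|\le \|f(s,\cdot)\|_{C^0(B(0,|x|+1))}$, which is bounded in $s$ by hypothesis). By the Lebesgue differentiation theorem in one variable, for each fixed $x$ there is a full-measure set $C_x\subset[0,T]$ on which $t\mapsto F(t,x)$ is differentiable with $D_tF(t,x)=f(t,x)$. The difficulty — and this is the step I expect to be the main obstacle — is producing a single full-measure set $C$ that works simultaneously for all $x$, since $\R^N$ is uncountable and an uncountable union of null sets need not be null. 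The remedy is to exploit the spatial regularity of $f$ to reduce to countably many $x$. Concretely: on $B(0,R)$ the family $\{f(s,\cdot):s\in[0,T]\}$ is equi-Hölder, hence equi-uniformly-continuous; fix a countable dense set $\{x_k\}\subset\R^N$ and let $C=\bigcap_k C_{x_k}$, still of full measure. For $t\in C$, $x\in B(0,R)$, and $x_k\to x$, one estimates
\begin{eqnarray*}
\left|\frac{F(t+h,x)-F(t,x)}{h}-f(t,x)\right|
&\le& \left|\frac{F(t+h,x)-F(t+h,x_k)}{h}\right|+\left|\frac{F(t,x_k)-F(t,x)}{h}\right|\\
&&{}+\left|\frac{F(t+h,x_k)-F(t,x_k)}{h}-f(t,x_k)\right|+|f(t,x_k)-f(t,x)|.
\end{eqnarray*}
The first two terms are bounded by $\frac{1}{|h|}\int_{[t,t+h]}|f(s,x)-f(s,x_k)|\,ds\le \sup_{s}\|f(s,\cdot)\|_{C^\theta(B(0,R))}|x-x_k|^\theta$, which is uniform in $h$ and small with $|x-x_k|$; the last term is likewise small; and for fixed $k$ the third term tends to $0$ as $h\to 0$ by the choice of $C_{x_k}$. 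Choosing $x_k$ close to $x$ first and then letting $h\to 0$ shows $D_tF(t,x)=f(t,x)$ for every $t\in C$ and every $x$. This yields (ii) and completes the proof.
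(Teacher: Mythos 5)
Your proof is correct, and part (ii) is essentially the paper's argument: both intersect the per-$x$ Lebesgue differentiation sets $C_x$ over a countable dense family of spatial points, then use the uniform-in-$t$ H\"older estimate $\sup_t\|f(t,\cdot)\|_{C^\theta(B(0,R))}<\infty$ to transfer the conclusion from that countable family to every $x\in\R^N$. In part (i) you take a slight variant: the paper applies Fubini to get measurability of $f(\cdot,x)$ for almost every $x$, and then extends to all $x$ by density and uniform convergence of $f(\cdot,x_n)\to f(\cdot,x)$; you instead represent $f(t,x_0)$ as the pointwise-in-$t$ limit of the spatial averages $\frac{1}{|B(x_0,r)|}\int_{B(x_0,r)}f(t,y)\,dy$, each of which is $t$-measurable by Fubini. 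Both routes rest on joint measurability plus the uniform spatial modulus of continuity, so the difference is cosmetic rather than structural. (Your first, aborted appeal to Scorza--Dragoni is the weaker link since one usually proves joint measurability \emph{from} the Carath\'eodory property rather than the reverse, but your direct averaging argument supersedes it cleanly.)
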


\begin{proof}
(i). Since $f$ is measurable, the function $f(\cdot,x)$ is
measurable in $(0,T)$ for almost any $x\in\R^N$. Let us denote by
$H$ the set of all $x\in\R^N$ such that the function $f(\cdot,x)$ is
measurable, and prove that $H=\R^N$. For this purpose, we observe
that, since it has a negligible complement, $H$ is a dense subset of
$\R^N$. Hence, for any $x\in\R^N$, there exists a sequence
$(x_n)\subset H$ converging to $x$ in $\R^N$. Since $f\in
M^{0,\theta}([0,T]\times\R^N)$, the function $f(t,\cdot)$ is
H\"older continuous in $B(0,R)$ of exponent $\theta$, uniformly with
respect to $t\in [0,T]$, where $R:=\max_{n\in\N}|x_n|+1$. Hence,
there exists a positive constant $C$, independent of $n$ and $t$,
such that
\begin{eqnarray*}
|f(t,x)-f(t,x_n)|\le C|x-x_n|^{\theta},\qquad t\in [0,T].
\end{eqnarray*}
This shows that $f(\cdot,x_n)$ converges to the function
$f(\cdot,x)$ uniformly in $[0,T]$. Consequently, $f(\cdot,x)$ is
measurable in $[0,T]$.

(ii). Since, by step (i), $f(\cdot,x)\in L^{\infty}(0,T)$
for any fixed $x\in\R^N$, the function $F$ is well defined and, for any $x\in\R^N$, there exists
a measurable set $C_x$, with negligible complement, such that $F(\cdot,x)$ is differentiable in
$C_x$ and $D_tF(t,x)=f(t,x)$ for any $t\in C_x$.

Let us set $C=\cap_{x\in\mathbb Q^N}C_x$ and fix $(t,x)\in
C\times\R^N$. Further, let $(x_n)\subset\mathbb Q^N$ converge to $x$
as $n\to +\infty$. Then, for any $h\in\R\setminus\{0\}$, we can
estimate
\begin{align*}
&\bigg |\frac{F(t+h,x)-F(t,x)}{h}-f(t,x)\bigg |\\
\le &
\frac{1}{|h|}\left |\int_t^{t+h}|f(s,x)-f(t,x)|ds\right |\\
\le&\frac{1}{|h|}\left |\int_t^{t+h}|f(s,x)-f(s,x_n)|ds\right | +
\frac{1}{|h|}\left |\int_t^{t+h}|f(s,x_n)-f(t,x_n)|ds\right |\\
&+|f(t,x_n)-f(t,x)|\\
\le & 2\sup_{t\in
[0,T]}\|f(t,\cdot)\|_{C^{\theta}(B(0,R))}|x-x_n|^{\theta} +
\frac{1}{|h|}\left |\int_t^{t+h}|f(s,x_n)-f(t,x_n)|ds\right |,
\end{align*}
where $R:=\max_{n\in\N}|x_n|+1$. Letting $h\to 0$, we get
\begin{eqnarray*}
\limsup_{h\to 0}\left |\frac{F(t+h)-F(t)}{h}-f(t,x)\right |\le
2\|f\|_{B^{0,\theta}([0,T]\times\R^N)}|x-x_n|^{\theta},
\end{eqnarray*}
for any $n\in\N$, which implies that $F$ is differentiable with
respect to time at $(t,x)$ and $D_tF(t,x)=f(t,x)$. This completes
the proof.
\end{proof}

\begin{lemma}
\label{lemma-2} Assume that Hypotheses $\ref{ipos-2}$ are satisfied.
Then, there exist sequences $(q_{ij}^{(n)})$, $(b_{j}^{(n)})$ and
$(c^{(n)})$ with the following properties:
\begin{enumerate}[\rm (i)]
\item
$q_{ij}^{(n)}$, $b_{j}^{(n)}$ $(i,j=1,\ldots,N)$, $c^{(n)}$ and
their spatial derivatives, up to the third-order, belong to
$C^{\delta/2,\delta}([0,T]\times B(0,R))$ for any $R>0$;
\item
there exists a measurable set ${\mathcal E}\subset [0,T]$, whose
complement in $[0,T]$ is negligible, such that, for any
$i,j=1,\ldots,N$, $q^{(n)}_{ij}$, $b_{ij}^{(n)}$ and $c^{(n)}$
converge pointwise in ${\mathcal E}\times\R^N$, respectively to
$q_{ij}$, $b_{j}$ and $c$, as $n\to +\infty$;
\item
for any $n\in\N$, the functions $q_{ij}^{(n)}$, $b_j^{(n)}$
$(i,j=1,\ldots,N)$ and $c^{(n)}$ satisfy Hypotheses
$\ref{ipos-1}(ii)$ to $\ref{ipos-1}(vii)$ with the functions
$\nu,d,r,\varrho$ being replaced by new functions
$\nu_n,d_n,r_n,\varrho_n$ and the same constants $C,L_1,L_2,L_3$.
Moreover, there exist two positive constants $\hat\nu_0$ and $\hat
c_0$ such that $\nu_n(t,x)\ge\hat\nu_0$ and $c_n(t,x)\le \hat c_0$
for any $(t,x)\in [0,T]\times\R^N$ and any $n\in\N$.
\end{enumerate}
\end{lemma}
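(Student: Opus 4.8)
The plan is to construct the approximating coefficients by mollifying in the time variable only. First, I would fix a standard mollifier $\rho\in C^\infty_c(\R)$ with $\rho\ge 0$, $\int_\R\rho=1$, supported in $(-1,1)$, and extend each coefficient (say $q_{ij}$, $b_j$, $c$) to all of $\R\times\R^N$ by reflection or by setting it equal to its values at the endpoints $t=0$ and $t=T$ outside $[0,T]$, in such a way that Hypotheses \ref{ipos-1}(ii)--(vii) persist on the extended time interval (with possibly slightly worsened but still finite constants, and with $\nu$, $d$, $r$, $\varrho$ extended correspondingly). Then I would set
\begin{eqnarray*}
q_{ij}^{(n)}(t,x)=\int_\R n\,\rho(ns)\,q_{ij}(t-s,x)\,ds,
\end{eqnarray*}
and likewise for $b_j^{(n)}$, $c^{(n)}$, $\nu_n$, $d_n$, $r_n$, $\varrho_n$. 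Since the convolution is only in $t$, the spatial regularity is preserved: by Hypothesis \ref{ipos-2}(i) each $q_{ij}(t,\cdot)$ is in $C^{3+\delta}(B(0,R))$ uniformly in $t$, so $q_{ij}^{(n)}(t,\cdot)$ inherits the same bound, and differentiability in $t$ comes for free from the smooth kernel; this gives part (i). For part (ii), Lemma \ref{lemma-1}(i) guarantees that $q_{ij}(\cdot,x)$ is measurable (indeed in $L^\infty(0,T)$) for \emph{every} $x\in\R^N$, and then the standard Lebesgue differentiation / Lebesgue-point theorem gives a null set off which $q_{ij}^{(n)}(t,x)\to q_{ij}(t,x)$; taking a countable intersection over a countable dense set of $x$'s and using the uniform-in-$t$ spatial H\"older continuity (exactly the argument already used in the proof of Lemma \ref{lemma-1}) upgrades this to pointwise convergence on a single null-complement set ${\mathcal E}\times\R^N$.

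The heart of the matter is part (iii): checking that the structural inequalities of Hypotheses \ref{ipos-1}(ii)--(vii) are stable under this time-averaging, with constants independent of $n$. Each of these conditions is, fortunately, either linear or convex in the coefficients and their spatial derivatives, with the right-hand side proportional to $\nu$. For instance, $\langle Q(t,x)\xi,\xi\rangle\ge\nu(t,x)|\xi|^2$ holds for a.e.\ $t$ by Hypothesis \ref{ipos-2}(ii); averaging both sides against $n\rho(ns)$ (a probability measure) preserves the inequality, giving $\langle Q^{(n)}(t,x)\xi,\xi\rangle\ge\nu_n(t,x)|\xi|^2$ for \emph{all} $t$. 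The same mechanism handles \eqref{cond:qij-1}--\eqref{cond:b}, \eqref{dissip}, \eqref{cond-deriv-b}, \eqref{cond-deriv-c}, and the bounds $|D^\beta q_{ij}|\le K_{|\beta|}\nu$: each says "(a linear functional of the coefficients) $\le$ (constant)$\,\nu$", and the spatial derivatives commute with the time-convolution, so the averaged coefficients satisfy the averaged inequality with the \emph{same} constants. For the quadratic-form inequality $\sum D_{lm}q_{hk}\,\xi_{hk}\xi_{lm}\le K_2\nu\sum\xi_{hk}^2$ the same is true since it is linear in the $q_{hk}$. The one genuinely nonlinear condition is \eqref{cond-funz}, which involves $\varrho^2$: here I would use that $s\mapsto s^2$ is convex, so by Jensen's inequality $\varrho_n(t,x)^2\le\int n\rho(ns)\varrho(t-s,x)^2\,ds$, whence $d_n+L_1r_n+L_2\varrho_n^2\le\int n\rho(ns)\big(d+L_1r+L_2\varrho^2\big)(t-s,x)\,ds\le L_3\int n\rho(ns)\nu(t-s,x)\,ds=L_3\nu_n(t,x)$. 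Thus all of (iii) goes through with the constants $C_1,C_2,C_3,K_1,K_2,K_3,L_1,L_2,L_3$ unchanged. The uniform lower bound $\nu_n\ge\hat\nu_0$ follows by averaging $\nu\ge\nu_0$ (with $\hat\nu_0=\nu_0$, or a slightly smaller constant if the extension off $[0,T]$ forced it), and $c^{(n)}\le\hat c_0$ follows by averaging $c\le c_0$. Finally, the Lyapunov condition \eqref{liapunov} with the \emph{same} function $\varphi$ and exponent $\lambda$ holds for the averaged operator because ${\mathscr A}^{(n)}\varphi-\lambda\varphi$ is, for fixed $x$, the time-average of ${\mathscr A}\varphi-\lambda\varphi$, whose supremum over $(t,x)$ is finite by hypothesis.

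The main obstacle I anticipate is bookkeeping at the temporal boundary: the convolution at times $t$ within $1/n$ of $0$ or $T$ reaches outside $[0,T]$, so the extension of the coefficients past the endpoints must be done carefully enough that \emph{all} of Hypotheses \ref{ipos-1}(ii)--(vii) survive there with uniform constants --- the cleanest choice is the constant extension $q_{ij}(t,x):=q_{ij}(0,x)$ for $t<0$ and $q_{ij}(t,x):=q_{ij}(T,x)$ for $t>T$ (and similarly for everything else), which trivially preserves every pointwise inequality since the endpoint slices already satisfy them (after possibly redefining the coefficients on the original null set ${\mathcal D}^c$ so that $0,T\in{\mathcal D}$, or choosing Lebesgue points near the endpoints). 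A secondary point is that one must verify the measurability and $L^\infty$-boundedness in $t$ of $d(\cdot,x)$, $r(\cdot,x)$, $\varrho(\cdot,x)$ needed to define $d_n,r_n,\varrho_n$ --- but this is exactly granted by the last sentence of Hypothesis \ref{ipos-2}(ii). Once these preliminaries are in place, the proof is essentially a list of "average both sides" remarks plus one invocation of Jensen, so I would present it compactly.
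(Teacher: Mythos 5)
Your proposal is correct and follows essentially the paper's strategy: mollify in time with a nonnegative kernel, use that every structural hypothesis in \ref{ipos-1}(ii)--(vii) is linear in the coefficients (with $\nu$ on the right) so it passes directly to the averages, and invoke Jensen/H\"older for the lone quadratic constraint \eqref{cond-funz}. The paper sidesteps your endpoint-extension bookkeeping by convolving with a Gaussian kernel truncated to $[0,T]$ (so the kernel mass drops below one near the endpoints, and $\hat\nu_0$, $\hat c_0$ come out as explicit $T$-dependent fractions of $\nu_0$, $c_0$), and it obtains part (ii) from $L^p$-convergence plus a diagonal subsequence rather than from Lebesgue points --- so it in fact only proves convergence along a subsequence, which is all that Theorem \ref{thm-main-2} needs; these are cosmetic variations and your route, if anything, gives the full-sequence convergence stated in the lemma.
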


\begin{proof}
By Lemma \ref{lemma-1}, the functions $q_{ij}(\cdot,x)$, $b_j(\cdot,x)$ $(i,j=1,\ldots,N)$ and
$c(\cdot,x)$ are in $L^{\infty}(0,T)$ for any $x\in\R^N$.
Thus, for any $n\in\N$, we can define the functions $q^{(n)}_{ij}$, $b^{(n)}_{ij}$ and $c^{(n)}$ by setting
\begin{align*}
q_{ij}^{(n)}(t,x)&=\left (\frac{n}{4\pi}\right )^{\frac{1}{2}}
\int_0^Tq_{ij}(\tau,x)\exp\left (-\frac{n}{4}|t-\tau|^2\right
)d\tau,\\
b_j^{(n)}(t,x)&=\left (\frac{n}{4\pi}\right )^{\frac{1}{2}}
\int_0^Tb_j(\tau,x)\exp\left (-\frac{n}{4}|t-\tau|^2\right )d\tau,\\
c^{(n)}(t,x)&=\left (\frac{n}{4\pi}\right )^{\frac{1}{2}}
\int_0^Tc(\tau,x)\exp\left (-\frac{n}{4}|t-\tau|^2\right )d\tau,
\end{align*}
for any $(t,x)\in [0,T]\times\R^N$ and any $i,j=1,\ldots,N$.
Clearly, $q^{(n)}_{ij}$, $b^{(n)}_j$ and $c^{(n)}$ and their spatial
derivatives, up to the third-order, belong to
$C^{\delta/2,\delta}([0,T]\times B(0,R))$ for any $i,j=1,\ldots,N$ and
any $R>0$.

Let us prove that, for any $i,j=1,\ldots,N$, $q_{ij}^{(n)}$
converges pointwise in ${\mathcal E}\times\R^N$ to $q_{ij}$, for
some measurable set ${\mathcal E}\subset [0,T]$ whose complement is
negligible. Then, the same argument can be applied to prove the
convergence of $b_j^{(n)}$ and $c^{(n)}$ to $b_j$ and $c$,
respectively.

Since $q_{ij}^{(n)}(\cdot,x)\to q_{ij}(\cdot,x)$ in $L^p(0,T)$ for
any $p\in [1,+\infty)$, any $x\in\R^N$ and any $i,j=1,\ldots,N$, we
can find out an increasing sequence $(n_k^x)\subset\N$ such that the
subsequence $q_{ij}^{(n_k^x)}(t,x)$ converges to $q_{ij}(t,x)$ as
$n$ tends to $+\infty$ almost everywhere in $(0,T)$. By a classical
diagonal procedure, we can determine an increasing sequence
$(n_k)\subset\N$ and a measurable set ${\mathcal E}\subset [0,T]$,
whose complement is negligible in $[0,T]$, such that
\begin{eqnarray}
\lim_{k\to
+\infty}q_{ij}^{(n_k)}(t,x)=q_{ij}(t,x),\qquad\;\,(t,x)\in {\mathcal
E}\times{\mathbb Q}^N,\;\,i,j=1,\ldots,N. \label{lim-raz}
\end{eqnarray}

Let us now show that we can extend \eqref{lim-raz} to any
$(t,x)\in{\mathcal E}\times\R^N$. For this purpose, we fix
$(t,x)\in{\mathcal E}\times\R^N$ and a sequence $(x_m)\subset
{\mathbb Q}^N$ converging to $x$ as $m$ tends to $+\infty$. Since
\begin{eqnarray*}
\sup_{t\in (0,T)}\|q_{ij}^{(n_k)}(t,\cdot)\|_{C^{\theta}(B(0,R))}\le
\sup_{t\in (0,T)}\|q_{ij}(t,\cdot)\|_{C^{\theta}(B(0,R))},
\end{eqnarray*}
for any $i,j=1,\ldots,N$, where $R=1+\sup_{m\in\N}|x_m|$, we can
write
\begin{align}
&|q^{(n_k)}_{ij}(t,x)-q_{ij}(t,x)|\nonumber\\
\le &\,|q^{(n_k)}_{ij}(t,x_m)-q^{(n_k)}_{ij}(t,x)|
+|q^{(n_k)}_{ij}(t,x_m)-q_{ij}(t,x_m)|+|q_{ij}(t,x_m)-q_{ij}(t,x)|\nonumber\\
\le\,& 2\sup_{t\in
(0,T)}\|q_{ij}(t,\cdot)\|_{C^{\theta}(B(0,R))}|x-x_m|^{\theta}+
|q^{(n_k)}_{ij}(t,x_m)-q_{ij}(t,x_m)|, \label{stima-q}
\end{align}
for any $k,m\in\N$. Taking, first, the limsup as $k\to +\infty$ in
the first- and last-side of \eqref{stima-q}, and then letting $m\to
+\infty$, \eqref{lim-raz} follows, for any $x\in\R^N$. Property (ii)
is proved.

Let us now prove property (iii). Taking Hypotheses \ref{ipos-1}(i)
into account, we get
\begin{eqnarray*}
\langle Q^{(n)}(t,x)\xi,\xi\rangle &\hs{5}\ge\hs{5}& |\xi|^2\left
(\frac{n}{4\pi}\right )^{\frac{1}{2}}
\int_0^T\nu(\tau,x)e^{-\frac{n}{4}|t-\tau|^2}d\tau:=\nu_n(t,x)|\xi|^2,
\end{eqnarray*}
for any $\xi\in\R^N$, any $(t,x)\in [0,T]\times\R^N$ and any
$n\in\N$, where $Q^{(n)}=(q_{ij}^{(n)})$. Note that the function
$\nu_n$ can be bounded from below in $[0,T]\times\R^N$ by a positive
constant, independent of $n$. Indeed,
\begin{eqnarray}
\nu_n(t,x) &\hs{5}\ge\hs{5}& \nu_0\left (\frac{n}{4\pi}\right
)^{\frac{1}{2}}\bigg\{\int_0^te^{-\frac{n}{4}\tau^2}d\tau
+\int_{0}^{T-t}e^{-\frac{n}{4}\tau^2}d\tau\bigg\}\nonumber\\
&\hs{5}\ge\hs{5}&\nu_0
\left (\frac{n}{4\pi}\right )^{\frac{1}{2}}\int_0^{\frac{T}{2}}e^{-\frac{n}{4}s^2}ds\nonumber\\
&\hs{5}\ge\hs{5}&
\frac{\nu_0}{2\sqrt{\pi}}\int_0^{\frac{T}{2}}e^{-\frac{1}{4}s^2}ds,
\label{stima-unif-nu-n}
\end{eqnarray}
for any $(t,x)\in [0,T]\times\R^N$.

Now, an easy computation shows that, for any $n\in\N$, the functions
$q_{ij}^{(n)}$, $b_j^{(n)}$ $(i,j=1,\ldots,N$) and $c^{(n)}$ satisfy
Hypothesis \ref{ipos-1}(iii) and \ref{ipos-1}(v) as well as
conditions \eqref{dissip}-\eqref{cond-deriv-c}, with the same
constants $C_j$ and $K_j$ ($j=1,2,3$) and $\nu,d,r,\varrho$ being
replaced with the functions $\nu_n,d_n,r_n,\varrho_n$, where
\begin{align*}
d_n(t,x)&=\left (\frac{n}{4\pi}\right )^{\frac{1}{2}}
\int_0^Td(\tau,x)\exp\left (-\frac{n}{4}|t-\tau|^2\right )d\tau,\\
r_n(t,x)&=\left (\frac{n}{4\pi}\right )^{\frac{1}{2}}
\int_0^Tr(\tau,x)\exp\left (-\frac{n}{4}|t-\tau|^2\right )d\tau,\\
\varrho_n(t,x)&=\left (\frac{n}{4\pi}\right )^{\frac{1}{2}}
\int_0^T\varrho(\tau,x)\exp\left (-\frac{n}{4}|t-\tau|^2\right )d\tau,
\end{align*}
for any $(t,x)\in [0,T]\times\R^N$. Arguing as in the proof of
\eqref{stima-unif-nu-n}, we can easily show that
\begin{eqnarray*}
\varrho_n(t,x)\ge
\frac{\varrho_0}{2\sqrt{\pi}}\int_0^{\frac{T}{2}}e^{-\frac{1}{4}s^2}ds.
\end{eqnarray*}
Moreover, integrating condition \eqref{cond-funz} we get
\begin{eqnarray*}
d_n(t,x)+L_1r_n(t,x)+L_2\left (\frac{n}{4\pi}\right )^{\frac{1}{2}}\int_0^T\varrho^2(\tau,x)\exp\left (-\frac{n}{4}|t-\tau|^2\right )d\tau\le L_3\nu_n(t,x),
\end{eqnarray*}
for any $(t,x)\in [0,T]\times\R^N$ and any $n\in\N$. H\"older inequality yields
\begin{align*}
\left (\frac{n}{4\pi}\right )^{\frac{1}{2}}\int_0^T\varrho^2(\tau,x)\exp\left (-\frac{n}{4}|t-\tau|^2\right )d\tau&\ge
\frac{n}{4\pi}\left (\int_0^T\varrho(\tau,x)\exp\left (-\frac{n}{4}|t-\tau|^2\right )d\tau\right )^2\\
&=(\varrho^{(n)}(t,x))^2,
\end{align*}
for any $(t,x)$ as above. Hence, condition \eqref{cond-funz} is satisfied with
$d,r,\varrho$ being replaced by $d_n,r_n,\varrho_n$ and the same constants $L_1,L_2,L_3$.

Next, we observe that $c^{(n)}$ satisfies Hypothesis
\ref{ipos-1}(iv) with $c_0$ being replaced by
\begin{eqnarray*}
\frac{c_0}{2\sqrt{\pi}}\int_0^{\frac{T}{2}}e^{-\frac{1}{4}s^2}ds.
\end{eqnarray*}

Finally, integrating condition \eqref{liapunov} with respect to time, we easily deduce that
\begin{eqnarray*}
\sup_{(t,x)\in [0,T]\times\R^N}\!\{({\mathscr
A}^{(n)}\varphi)(t,x)\hs{1}-\hs{1}\lambda\varphi(x)\}\le
\sup_{(t,x)\in (0,T)\times\R^N}\!\{({\mathscr
A}\varphi)(t,x)\hs{1}-\hs{1}\lambda\varphi(x)\}<+\infty,
\end{eqnarray*}
for any $n\in\N$, where by ${\mathscr A}^{(n)}$ we have denoted the elliptic operator
whose coefficients are $q_{ij}^{(n)}$, $b_j^{(n)}$ ($i,j=1,\ldots,N$) and $c^{(n)}$. This completes the proof.
\end{proof}

We now consider the following maximum principle, which generalizes Proposition \ref{maximum}
to the case of noncontinuous coefficients.

\begin{proposition}\label{maxprinc}
Assume that Hypotheses $\ref{ipos-2}$ are satisfied and let $u$ be a
solution to problem \eqref{nonom-bdd}, in the sense of Definition
$\ref{defin-1}$, corresponding to $f\in C_b^2(\R^N)$ and a bounded
and measurable function $g:[0,T]\times\R^N\to\R$ such that the
function $g(t,\cdot)$ is continuous for any $t\in [0,T]$. If $f\le
0$ and $g\le 0$, then $u\le 0$.
\end{proposition}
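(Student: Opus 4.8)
The plan is to run the usual localization scheme for maximum principles on $\R^N$ based on a Lyapunov function, replacing the pointwise parabolic maximum principle by the Nazarov--Ural'tseva maximum principle for operators with measurable coefficients; the notion of solution in Definition \ref{defin-1} supplies precisely the regularity this requires. A preliminary reduction lets us assume $c\le 0$: if the constant $c_0$ of Hypothesis \ref{ipos-1}(iv) is positive, replace $u$ by $e^{-c_0t}u$, which, since ${\mathscr A}$ acts only in $x$, solves in the sense of Definition \ref{defin-1} the same problem with ${\mathscr A}$ replaced by ${\mathscr A}-c_0 I$, with data $f\le 0$ and $e^{-c_0t}g\le 0$; the new zero-order coefficient is $c-c_0\le 0$, and, as $\varphi>0$ and $c_0>0$, the Lyapunov inequality \eqref{liapunov} still holds with the same $\lambda$. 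Since $e^{-c_0t}u\le 0$ exactly when $u\le 0$, from now on I assume $c\le 0$.

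Next I would manufacture a spatially coercive supersolution from the Lyapunov function $\varphi$ of Hypothesis \ref{ipos-1}(vii). Put $\Lambda_0:=\sup_{[0,T]\times\R^N}\{{\mathscr A}\varphi-\lambda\varphi\}<+\infty$, $b:=\max\{\Lambda_0,0\}/\lambda\ge 0$, and $\psi(t,x):=e^{\lambda t}(\varphi(x)+b)$. Using ${\mathscr A}\psi=e^{\lambda t}({\mathscr A}\varphi+cb)$ together with $c\le 0$ and $b\ge 0$ one gets $D_t\psi-{\mathscr A}\psi\ge e^{\lambda t}(\lambda b-\Lambda_0)\ge 0$ wherever the coefficients are defined, i.e. almost everywhere, while $\psi>0$ on $[0,T]\times\R^N$, $\psi(0,\cdot)>0$, and $\psi(t,x)\to+\infty$ as $|x|\to+\infty$, uniformly in $t\in[0,T]$. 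For $\eps>0$ set $w:=u-\eps\psi$. By Definition \ref{defin-1}(i), $u$ is Lipschitz in $t$ and has bounded continuous first- and second-order spatial derivatives on $[0,T]\times\R^N$; hence so does $w$, so $w$ is continuous on $[0,T]\times\R^N$ and has bounded time and space derivatives on bounded cylinders, which is the regularity under which the Nazarov--Ural'tseva principle is available. On the set $G$ of Definition \ref{defin-1}(iii), which has negligible complement, $D_tw-{\mathscr A}w=(D_tu-{\mathscr A}u)-\eps(D_t\psi-{\mathscr A}\psi)=g-\eps(D_t\psi-{\mathscr A}\psi)\le 0$; moreover $w(0,\cdot)=f-\eps(\varphi+b)\le 0$, and $w(t,x)\to-\infty$ as $|x|\to+\infty$ uniformly in $t$, because $u$ is bounded and $\psi$ is coercive.

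Now I would prove $w\le 0$ on $[0,T]\times\R^N$; this gives $u\le\eps\psi$ and, letting $\eps\downarrow 0$, $u\le 0$. Suppose instead $m:=\sup w>0$. By the uniform decay of $w$ at infinity there is $R>0$ with $w\le 0$ on $[0,T]\times(\R^N\setminus B(0,R))$, so $m$ is attained on the compact set $[0,T]\times\overline{B(0,R)}$, say at $(\bar t,\bar x)$; since $w(0,\cdot)\le 0<m$ we get $\bar t>0$, and since $w\le 0<m$ on $\{|x|=R\}$ we get $|\bar x|<R$. Thus $(\bar t,\bar x)$ is not on the parabolic boundary $\partial_p Q$ of $Q:=(0,T)\times B(0,R)$. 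On $Q$ the coefficients of ${\mathscr A}$ are measurable, bounded and uniformly elliptic (Hypotheses \ref{ipos-2} and \ref{ipos-1}(ii)), $c\le 0$, and $w$ satisfies $D_tw-{\mathscr A}w\le 0$ a.e.; the Nazarov--Ural'tseva maximum principle then yields $\max_{\overline Q}w\le\max_{\partial_p Q}w^+=0$, since $w\le 0$ on $\partial_p Q$. This contradicts $w(\bar t,\bar x)=m>0$, so $m\le 0$ and the proof concludes.

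The delicate point is not the estimate itself but its interplay with Definition \ref{defin-1}: the Lipschitz-in-time regularity is exactly what places $w$ in the Sobolev class for which the Nazarov--Ural'tseva principle holds, and the set $G$ guarantees that the differential inequality is valid almost everywhere on the cylinder $Q$, even though $u$ is nowhere required to be differentiable in $t$ in the classical sense. The second essential ingredient is Hypothesis \ref{ipos-1}(vii): the Lyapunov function produces the coercive supersolution $\psi$ that compactifies the problem and, in effect, plays the role of a barrier ``at infinity''. The remaining verifications --- the reduction to $c\le 0$, the computation of $D_t\psi-{\mathscr A}\psi$, and the check that $w$ is an a.e. subsolution --- I expect to be routine. (Proposition \ref{maximum} follows by the same argument with the classical parabolic maximum principle in place of the Nazarov--Ural'tseva one.)
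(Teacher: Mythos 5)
Your proof is correct and follows essentially the same strategy as the paper: a coercive barrier built from the Lyapunov function of Hypothesis \ref{ipos-1}(vii) compactifies the problem, and the Nazarov--Ural'tseva maximum principle, applicable because Definition \ref{defin-1}(i),(iii) place $u$ in $W^{1,1}_\infty\subset W^{1,2}_{N+1}$ on bounded cylinders with the differential inequality holding a.e., concludes. The paper's presentation differs only cosmetically: it replaces $(\varphi,\lambda)$ by $(\varphi+C,\max\{\lambda,2c_0\})$ for $C$ large to get a strict Lyapunov inequality, then applies the maximum principle directly to $e^{-\hat\lambda t}u-\tfrac1n\hat\varphi$, instead of first reducing to $c\le 0$ and arguing by contradiction.
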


\begin{proof}
To begin with, we observe that there exists a positive function
$\hat\varphi:\R^N\to\R$ which blows up as $|x|\to +\infty$ and
${\mathscr A}\hat\varphi-\hat\lambda\hat\varphi<0$ in $[0,T]\times\R^N$ for
some $\hat\lambda>0$.
It suffices to replace in Hypothesis \ref{ipos-1}(vii) the function
$\varphi$ and the positive constant $\lambda$, respectively, with
the function $\hat\varphi=\varphi+C$ and
$\hat\lambda=\max\{\lambda,2c_0\}$, and to take $C$ sufficiently
large.

Let $u$ be a solution to problem \eqref{nonom-bdd}. For any $n\in\N$, we introduce the function
$v_n:[0,T]\times\R^N\to\R$
defined by
\begin{eqnarray*}
v_n(t,x)=e^{-\hat\l t}u(t,x)-\frac{1}{n}\hat\varphi(t,x),\qquad\;\,(t,x)\in [0,T]\times\R^N.
\end{eqnarray*}
Clearly, $v_n\in W^{1,1}_{\infty}((0,T)\times B(0,R))\subset
W^{1,2}_{N+1}((0,T)\times B(0,R))$ for any $R>0$, it satisfies the
differential inequality $D_tv_n-{\mathscr A}v_n+\hat\lambda v_n\le
e^{-\hat\l \cdot}g$ in the sense of distributions, and
$v_n(0,\cdot)\le f$. Hence, the Nazarov-Ural'tseva maximum principle
(see \cite[Theorem 1]{nazarov-uraltseva}) may be applied. It yields
\begin{eqnarray*}
v_n(t,x)\le \sup_{(t,x)\in (0,T)\times\partial B(0,R)}v_n^+(t,x),
\end{eqnarray*}
for any $(t,x)\in [0,T]\times \overline{B(0,R)}$ and any $R>0$.
Here, $v_n^+$ denotes the positive part of the function $v_n$.
Since, for any $n\in\N$, $v_n(t,x)$ tends to $-\infty$ as $|x|\to
+\infty$, uniformly with respect to $t$, $v_n(t,x)\le 0$ for any
$(t,x)\in [0,T]\times\R^N$. Letting $n\to +\infty$, yields the
assertion.
\end{proof}

We are now in a position to prove the main result of this section.
\begin{theorem}
\label{thm-main-2} Let Hypotheses $\ref{ipos-2}$ be satisfied. Fix
$\theta\in (0,1)$ and suppose that $f\in C^{2+\theta}_b(\R^N)$ and
$g\in B^{0,\theta}([0,T]\times\R^N)$. Then, the Cauchy problem
\eqref{nonom-bdd} admits a unique solution $u$, in the sense of
Definition $\ref{defin-1}$. The function $u$ belongs to
$B^{0,2+\theta}([0,T]\times\R^N)$ and there exists a positive
constant $C$, independent of $f$ and $g$, such that
\begin{eqnarray*}
\|u\|_{B^{0,2+\theta}([0,T]\times\R^N)}\le C\left
(\|f\|_{C^{2+\theta}_b(\R^N)}+\|g\|_{B^{0,\theta}([0,T]\times\R^N)}\right
).
\end{eqnarray*}
\end{theorem}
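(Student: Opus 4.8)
The plan is to prove Theorem \ref{thm-main-2} by a compactness/approximation argument, combining Lemma \ref{lemma-2} (which regularizes the coefficients), Theorem \ref{thm-schauder} (which solves the problem for the smooth approximating operators with uniform Schauder bounds), Lemma \ref{lemma-1} (which handles the time-integral and almost-everywhere differentiability), and Proposition \ref{maxprinc} (which gives uniqueness). The uniqueness part is immediate: if $u_1,u_2$ are two solutions in the sense of Definition \ref{defin-1}, then $u_1-u_2$ solves \eqref{nonom-bdd} with $f=0$, $g=0$, so applying Proposition \ref{maxprinc} to $\pm(u_1-u_2)$ gives $u_1\equiv u_2$.

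For existence, I would first invoke Lemma \ref{lemma-2} to obtain smooth approximating coefficients $q_{ij}^{(n)}, b_j^{(n)}, c^{(n)}$, defining operators ${\mathscr A}^{(n)}$ that satisfy Hypotheses \ref{ipos-1} uniformly in $n$ (same structural constants, uniform lower bound $\hat\nu_0$ on ellipticity, uniform upper bound $\hat c_0$ on the zero-order term). By Theorem \ref{thm-schauder}, for each $n$ the Cauchy problem $D_tu={\mathscr A}^{(n)}u+g$, $u(0,\cdot)=f$ has a unique bounded classical solution $u_n\in C^{0,2+\theta}([0,T]\times\R^N)$, and by Remark \ref{rem-schauder}(i) the Schauder estimate
\begin{eqnarray*}
\|u_n\|_{C^{0,2+\theta}([0,T]\times\R^N)}\le C\left(\|f\|_{C^{2+\theta}_b(\R^N)}+\|g\|_{B^{0,\theta}([0,T]\times\R^N)}\right)
\end{eqnarray*}
holds with $C$ independent of $n$, since $C$ depends only on the structural constants, $\hat\nu_0$ and $\hat c_0$. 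In particular the sequences $(u_n)$, $(D_xu_n)$, $(D^2_xu_n)$ are uniformly bounded. To pass to the limit I would work on the integrated equation: writing $u_n(t,x)=f(x)+\int_0^t\bigl({\mathscr A}^{(n)}u_n(s,x)+g(s,x)\bigr)ds$, the integrand is bounded (using the uniform second-order bounds and the local boundedness of the coefficients via $M^{0,3+\delta}$), so the $u_n$ are equi-Lipschitz in time on each $[0,T]\times\overline{B(0,R)}$. Combined with the uniform $C^{2+\theta}$-bounds in space and interior parabolic Schauder estimates (on the regions where the coefficients are smooth) — or more robustly, just the spatial $C^{2+\theta}$-bound plus Ascoli–Arzelà — one extracts a subsequence with $u_n\to u$, $D_xu_n\to D_xu$, $D^2_xu_n\to D^2_xu$ locally uniformly in $[0,T]\times\R^N$, where $u$ inherits the bound $\|u\|_{B^{0,2+\theta}}\le C(\cdots)$ and satisfies Definition \ref{defin-1}(i),(ii).

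The key remaining point — and the main obstacle — is verifying Definition \ref{defin-1}(iii): that $u$ actually solves $D_tu={\mathscr A}u+g$ almost everywhere, with the measurability-of-the-good-set condition on $G_x$. Here I would fix $x\in\R^N$, pass to the limit in the integrated identity using Lemma \ref{lemma-2}(ii) (pointwise convergence of the coefficients on ${\mathcal E}\times\R^N$, $[0,T]\setminus{\mathcal E}$ negligible) together with dominated convergence — the integrands ${\mathscr A}^{(n)}u_n(s,x)$ converge pointwise a.e. in $s$ to ${\mathscr A}u(s,x)$ and are dominated by an integrable function thanks to the uniform bounds and local bounds on the coefficients — to obtain
\begin{eqnarray*}
u(t,x)=f(x)+\int_0^t\bigl({\mathscr A}u(s,x)+g(s,x)\bigr)ds,\qquad t\in[0,T],\ x\in\R^N.
\end{eqnarray*}
The integrand $s\mapsto{\mathscr A}u(s,x)+g(s,x)$ is, for each fixed $x$, a bounded measurable function of $s$ (measurability of $s\mapsto q_{ij}(s,x)$ etc.\ comes from Lemma \ref{lemma-1}(i); $s\mapsto D^2_xu(s,x)$ is continuous). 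Applying Lemma \ref{lemma-1}(ii) to this integrand gives a measurable set $G_x\subset[0,T]$ of full measure on which $t\mapsto u(t,x)$ is differentiable with $D_tu(t,x)={\mathscr A}u(t,x)+g(t,x)$; setting $G=\{(t,x):t\in G_x\}$ one checks, using a countable-dense-in-$x$ argument as in the proofs of Lemmas \ref{lemma-1} and \ref{lemma-2}, that $[0,T]\times\R^N\setminus G$ is negligible. The delicate part throughout is the bookkeeping of null sets — the intersection over rational $x$, the set ${\mathcal E}$ from Lemma \ref{lemma-2}, and the exceptional sets $G_x$ — and confirming that the dominating function for the dominated-convergence step is genuinely $s$-integrable uniformly on compact $x$-sets; this is where most of the care is needed, though it is routine given the uniform Schauder bound and the $M^{0,3+\delta}$-regularity of the coefficients.
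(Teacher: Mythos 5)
Your proposal follows the paper's proof almost step for step (regularize coefficients via Lemma \ref{lemma-2}, apply Theorem \ref{thm-schauder} with uniform constants from Remark \ref{rem-schauder}(i), equi-Lipschitz in time from the integrated equation, Ascoli--Arzel\`a, pass to the limit in the integrated identity, and finish with Lemma \ref{lemma-1}(ii); uniqueness from Proposition \ref{maxprinc}). However, there is one genuine gap: you solve the approximating problem $D_t u = {\mathscr A}^{(n)} u + g$ with the \emph{original} right-hand side $g$. Theorem \ref{thm-schauder} requires $g\in C^{0,\theta}([0,T]\times\R^N)$, i.e.\ $g$ continuous in $(t,x)$ with $\sup_t \|g(t,\cdot)\|_{C^\theta_b}<\infty$; here $g$ is merely in $B^{0,\theta}$, so it is only measurable in $t$, and even with smooth coefficients one cannot invoke the classical existence and Schauder theory for the approximating problem without first smoothing $g$ in time. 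The paper resolves this by mollifying $g$ as well, setting $g^{(n)}(t,x)=(n/4\pi)^{1/2}\int_0^T g(\tau,x)\exp(-n|t-\tau|^2/4)\,d\tau \in C^{0,\theta}([0,T]\times\R^N)$ with $\|g^{(n)}\|_{C^{0,\theta}}\le\|g\|_{B^{0,\theta}}$ and $g^{(n)}\to g$ pointwise on a full-measure set in $t$ (by the same argument as in Lemma \ref{lemma-2}); one then passes to the limit simultaneously in the coefficients and in $g^{(n)}$ in the integrated identity. You need to add this step, and the convergence $g^{(n)}\to g$ has to be fed into the dominated-convergence argument at the end.

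A smaller point: the extraction of the $C^{0,2}$-convergent subsequence needs more than ``the spatial $C^{2+\theta}$-bound plus Ascoli--Arzel\`a.'' One has equi-Lipschitz continuity in $t$ for $u_n$ and a uniform $C^{2+\theta}$ bound in $x$, but to get joint equicontinuity of $D_i u_n$ and $D_{ij}u_n$ in $(t,x)$ one must interpolate, e.g.
\begin{equation*}
\|D^2 u_n(t,\cdot)-D^2 u_n(s,\cdot)\|_{C(B(0,R))}\le K\,\|u_n(t,\cdot)-u_n(s,\cdot)\|_{C(B(0,R))}^{\theta/(2+\theta)}\|u_n(t,\cdot)-u_n(s,\cdot)\|_{C^{2+\theta}(B(0,R))}^{2/(2+\theta)},
\end{equation*}
which combined with the Lipschitz-in-$t$ bound gives a uniform H\"older modulus in $t$ for $D^2_x u_n$. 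Your alternative suggestion of using interior parabolic Schauder estimates ``where the coefficients are smooth'' is not available: after regularization the coefficients are smooth in $t$ for each $n$, but only in a $C^{\delta/2,\delta}$ sense with constants that degenerate as $n\to\infty$, so one cannot get $n$-uniform estimates on $D_t u_n$ beyond sup bounds that way. The interpolation route is the correct one and is what the paper uses.
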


\begin{proof}
The uniqueness of the solution is a straightforward consequence of
the maximum principle in Proposition \ref{maxprinc}.

To prove that problem \eqref{nonom-bdd} actually admits a solution in the sense
of Definition \ref{defin-1}, we use an approximation argument. For any $n\in\N$, we
introduce the operator ${\mathscr A}^{(n)}$ defined by
\begin{eqnarray*}
{\mathscr A}^{(n)}=\sum_{i,j=1}^Nq_{ij}^{(n)}D^2_{ij}+\sum_{j=1}^Nb_j^{(n)}D_j+c^{(n)}I,
\end{eqnarray*}
where the coefficients $q_{ij}^{(n)}$, $b_j^{(n)}$
($i,j=1,\ldots,N$) and $c^{(n)}$ are defined in Lemma \ref{lemma-2}.
We further approximate the function $g$ by a sequence of functions
$g^{(n)}\in C^{0,\theta}([0,T]\times\R^N)$, defined by
\begin{eqnarray*}
g^{(n)}(t,x)=\left (\frac{n}{4\pi}\right )^{\frac{1}{2}}
\int_0^Tg(\tau,x)\exp\left (-\frac{n}{4}|t-\tau|^2\right
)d\tau,\qquad\;\,(t,x)\in [0,T]\times\R^N.
\end{eqnarray*}
Clearly, $\|g^{(n)}\|_{C^{0,\theta}([0,T]\times\R^N)}\le
\|g\|_{B^{0,\theta}([0,T]\times\R^N)}$ for any $n\in\N$. Moreover,
by the proof of Lemma \ref{lemma-2}, there exists a set ${\mathcal
D}$, whose complement is negligible in $[0,T]$, such that
$g^{(n)}(t,x)$ tends to $g(t,x)$ as $n\to +\infty$, for any
$(t,x)\in {\mathcal D}\times\R^N$. Again, Lemma \ref{lemma-2}
implies that the coefficients of the operator ${\mathscr A}^{(n)}$
satisfy Hypotheses \ref{ipos-1}, with constants independent of $n$.
Hence, the Cauchy problem
\begin{equation}
\left\{
\begin{array}{lll}
D_tu(t,x)={\mathscr A}^{(n)}u(t,x)+g^{(n)}(t,x),\quad &t\in [0,T], &x\in\R^N,\\[1.5mm]
u(0,x)=f(x), &&x\in\R^N,
\end{array}
\right. \label{pb-approx}
\end{equation}
admits a unique classical solution $u_n$ which belongs to
$C^{0,2+\theta}([0,T]\times\R^N)$ and
\begin{align}
\|u_n\|_{C^{0,2+\theta}([0,T]\times\R^N)}&\le C_1\left
(\|f\|_{C^{2+\theta}_b(\R^N)}
+\|g^{(n)}\|_{C^{0,\theta}([0,T]\times\R^N)}\right )\nonumber\\
&\le C_1\left (\|f\|_{C^{2+\theta}_b(\R^N)}
+\|g\|_{B^{0,\theta}([0,T]\times\R^N)}\right ),
\label{schauder-estim-n}
\end{align}
for some positive constant $C_1$, independent of $n$ (see Remark
\ref{rem-schauder}(i)).

From the differential equation in \eqref{pb-approx} and the estimate
\eqref{schauder-estim-n} it follows that, for any $R>0$, the
sequence $(D_tu_n)$ is bounded in $[0,T]\times B(0,R)$. As a
byproduct, $\|u_n\|_{{\rm Lip}([0,T]\times B(0,R))}\le C$ for some
positive constant, independent of $n$. Using an interpolation
argument, we can now show that the functions $D_iu_n$ and
$D_{ij}u_n$ ($i,j=1,\ldots,N$, $n\in\N$) are equibounded and
equicontinuous in $[0,T]\times\overline{B(0,R)}$. Indeed, it is well
known that there exists a positive constant $K$ such that
\begin{align*}
\|\psi\|_{C^1(B(0,R))}&\le
K\|\psi\|_{C(B(0,R))}^{\frac{1+\theta}{2+\theta}}\|\psi\|_{C^{2+\theta}(B(0,R))}^{\frac{1}{2+\theta}},\\
\|\psi\|_{C^2(B(0,R))}&\le
K\|\psi\|_{C(B(0,R))}^{\frac{\theta}{2+\theta}}\|\psi\|_{C^{2+\theta}(B(0,R))}^{\frac{2}{2+\theta}},
\end{align*}
for any $\psi\in C^{2+\theta}(B(0,R))$. It follows that
\begin{align*}
&\|\nabla_xu_n(t,\cdot)-\nabla_xu_n(s,\cdot)\|_{C(B(0,R))}\\
\le&
K\|u_n(t,\cdot)-u_n(s,\cdot)\|_{C(B(0,R))}^{\frac{1+\theta}{2+\theta}}
\|u_n(t,\cdot)-u_n(s,\cdot)\|_{C^{2+\theta}(B(0,R))}^{\frac{1}{2+\theta}}\\
\le&
K_{\theta}\|u_n\|_{C^{0,2+\theta}([0,T]\times\R^N)}^{\frac{1}{2+\theta}}
[u_n]_{{\rm Lip}([0,T]\times B(0,R))}^{\frac{1+\theta}{2+\theta}}|t-s|^{\frac{1+\theta}{2+\theta}}\\
\le &K'_{\theta}\left (\|f\|_{C^{2+\theta}_b(\R^N)}+\|g\|_{B^{0,\theta}([0,T]\times\R^N)}\right
)|t-s|^{\frac{1+\theta}{2+\theta}},
\end{align*}
and, similarly,
\begin{align*}
&\|D^2u_n(t,\cdot)-D^2u_n(s,\cdot)\|_{C(B(0,R))}\\
\le&
K''_{\theta} \left (\|f\|_{C^{2+\theta}_b(\R^N)}+\|g\|_{B^{0,\theta}([0,T]\times\R^N)}\right )
|t-s|^{\frac{\theta}{2+\theta}},
\end{align*}
for any $s,t\in [0,T]$. Here, $K_{\theta}$, $K_{\theta}'$ and
$K_{\theta}''$ are positive constants which may blow up as $R\to
+\infty$. The previous estimates show that, for any
$i,j=1,\ldots,N$, the sequences $(D_ju_n)$ and $(D_{ij}u_n)$ are
equibounded and equicontinuous in $[0,T]\times B(0,R)$. Since $R$ is
arbitrary, by Ascoli-Arzel\`a theorem there exists a function $u\in
C^{0,2}([0,T]\times\R^N)$ and a subsequence $(u_{n_k})$ converging
to  $u$ in $C^{0,2}([0,T]\times K)$ for any compact set
$K\subset\R^N$. Moreover, $u$ belongs to ${\rm
Lip}([0,T]\times\overline{B(0,R)})$ for any $R>0$. Hence, for any
$x\in\R^N$, the function $u(\cdot,x)$ is differentiable almost
everywhere in $(0,T)$. Clearly, $u(0,\cdot)\equiv f$ since
$u_{n_k}(0,\cdot)\equiv f$ for any $k\in\N$.

To complete the proof, let us show that $u$ is differentiable with
respect to $t$ in $G\times\R^N$, for some measurable set $G\subset
[0,T]$, whose complement is negligible, and $D_tu(t,x)={\mathscr
A}u(t,x)+g(t,x)$ for such values of $t$. For this purpose, we
observe that, for any $(t,x)\in [0,T]\times\R^N$, it holds that
\begin{eqnarray}
u_{n_k}(t,x)= f(x)+\int_0^t\left ({\mathscr
A}^{(n_k)}u_{n_k}(s,x)+g^{(n_k)}(s,x)\right )ds.
\label{estim-der-t}
\end{eqnarray}
Taking Lemma \ref{lemma-2} into account, we can let $k\to +\infty$ in both the sides of
\eqref{estim-der-t}. This yields
\begin{eqnarray*}
u(t,x)=f(x)+\int_0^t\left ({\mathscr A}u(s,x)+g(s,x)\right
)ds,\qquad\;\,(t,x)\in [0,T]\times\R^N.
\end{eqnarray*}
The assumptions on the coefficients of the operator ${\mathscr A}$
and the regularity properties of the function $u$, already proved,
imply that the function ${\mathscr A}u+g$ satisfies the assumptions
of Lemma \ref{lemma-1}(ii). Therefore, there exists a set $G\subset
[0,T]$, whose complement is negligible in $[0,T]$, such that $u$ is
differentiable in $G\times\R^N$ with respect to the time variable
and $D_tu={\mathscr A}u+g$ in $G\times\R^N$. This accomplishes the
proof.
\end{proof}

Taking Remark \ref{rem-schauder}(i) into account and using the same
argument as in the proof of Theorem \ref{thm-main-2}, one can show
that Theorem \ref{thm-schauder} holds true also under a slightly
weaker regularity assumption on the coefficients of the operator
${\mathscr A}$. More precisely,

\begin{theorem}
\label{thm-schauder-cont-2} Suppose that Hypotheses $\ref{ipos-1}$
are satisfied, but $\ref{ipos-1}(i)$, in which the space
$C^{\delta/2,\delta}((0,T)\times B(0,R))$ is replaced with
$C^{0,\delta}([0,T]\times B(0,R))$ $($defined as in Definition
$\ref{defin-0}$, with $\R^N$ replaced by $B(0,R))$. Then, the
assertion of Theorem \ref{thm-schauder} holds true.
\end{theorem}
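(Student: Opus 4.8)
The plan is to run the approximation scheme of Theorem \ref{thm-main-2}, with the only change that the coefficients to be mollified are now continuous (merely of class $C^{0,\delta}$ in space, uniformly in time) instead of discontinuous in time; the extra continuity will be exactly what is needed to upgrade the limiting function from an a.e.-solution to a genuine bounded classical solution, so that it falls under the conclusion of Theorem \ref{thm-schauder}.

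First I would regularize in time as in Lemma \ref{lemma-2}, defining $q_{ij}^{(n)}$, $b_j^{(n)}$, $c^{(n)}$ as the Gaussian-in-$t$ averages of $q_{ij}$, $b_j$, $c$. Since convolution in $t$ commutes with spatial differentiation and preserves the uniform-in-$t$ spatial H\"older bounds, each $q_{ij}^{(n)}$, together with its spatial derivatives up to third order, is smooth in $t$ and lies in $C^{\delta/2,\delta}([0,T]\times B(0,R))$ for every $R>0$; thus the full Hypothesis \ref{ipos-1}(i) is restored for $\mathscr{A}^{(n)}$. The remaining checks — that $\mathscr{A}^{(n)}$ satisfies Hypotheses \ref{ipos-1}(ii)--(vii) with constants independent of $n$, and that $q_{ij}^{(n)}\to q_{ij}$, $b_j^{(n)}\to b_j$, $c^{(n)}\to c$ pointwise on $\mathcal{E}\times\R^N$ for a set $\mathcal{E}$ with negligible complement — are carried out exactly as in Lemma \ref{lemma-2}. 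Note that $g$ is already in $C^{0,\theta}([0,T]\times\R^N)$, so it needs no regularization and is kept unchanged in the approximating problems.

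Next I would apply Theorem \ref{thm-schauder} to each problem $D_tu_n=\mathscr{A}^{(n)}u_n+g$, $u_n(0,\cdot)=f$, obtaining a unique bounded classical solution $u_n\in C^{0,2+\theta}([0,T]\times\R^N)$ with $\|u_n\|_{C^{0,2+\theta}([0,T]\times\R^N)}\le C(\|f\|_{C^{2+\theta}_b(\R^N)}+\|g\|_{C^{0,\theta}([0,T]\times\R^N)})$, the constant $C$ independent of $n$ by Remark \ref{rem-schauder}(i). From the equation and this bound, $(D_tu_n)$ is bounded on $[0,T]\times B(0,R)$ for every $R$, so $(u_n)$ is equi-Lipschitz on $[0,T]\times\overline{B(0,R)}$; the interpolation argument of Theorem \ref{thm-main-2} then makes $(u_n)$, $(D_iu_n)$ and $(D_{ij}u_n)$ equibounded and equicontinuous on each such set. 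By Ascoli--Arzel\`a and a diagonal extraction, a subsequence $u_{n_k}$ converges in $C^{0,2}([0,T]\times K)$ for every compact $K\subset\R^N$ to a function $u\in C^{0,2+\theta}([0,T]\times\R^N)$ satisfying the same Schauder estimate and $u(0,\cdot)=f$.

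Finally I would pass to the limit in $u_{n_k}(t,x)=f(x)+\int_0^t(\mathscr{A}^{(n_k)}u_{n_k}(s,x)+g(s,x))\,ds$: for fixed $x$ the integrands are uniformly bounded on $[0,t]$ and, thanks to the $C^{0,2}_{\rm loc}$ convergence of $u_{n_k}$ and the pointwise convergence of the coefficients on $\mathcal{E}\times\R^N$, converge for a.e.\ $s$; dominated convergence gives $u(t,x)=f(x)+\int_0^t(\mathscr{A}u(s,x)+g(s,x))\,ds$. Here the weakened hypothesis is decisive: since $q_{ij},b_j,c$ are continuous in $(t,x)$ and $u\in C^{0,2}$, the map $(s,x)\mapsto\mathscr{A}u(s,x)+g(s,x)$ is continuous on $[0,T]\times\R^N$, so the fundamental theorem of calculus upgrades the identity to $D_tu=\mathscr{A}u+g$ everywhere, with $D_tu$ continuous; hence $u$ is a bounded classical solution in the sense of Theorem \ref{thm-schauder}. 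Uniqueness follows from Proposition \ref{maximum}, whose proof needs only Hypothesis \ref{ipos-1}(vii) and the classical parabolic maximum principle, which holds for the continuous coefficients at hand. The main obstacle — and the only new point with respect to Theorem \ref{thm-main-2} — is precisely verifying that continuity of the coefficients in $(t,x)$, without any H\"older-in-time control, already suffices to make $\mathscr{A}u+g$ continuous in time and thus $u(\cdot,x)$ of class $C^1$ rather than merely Lipschitz.
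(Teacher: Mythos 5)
Your proposal is correct and follows exactly the route the paper itself indicates in one sentence — namely, run the time-mollification and compactness scheme of Theorem \ref{thm-main-2}, using Remark \ref{rem-schauder}(i) to get $n$-independent Schauder constants — while making explicit the one step the paper leaves implicit: that continuity of the coefficients in $(t,x)$, together with the joint continuity of $D_iu$, $D_{ij}u$ obtained from the equicontinuity argument, makes $\mathscr{A}u+g$ jointly continuous, so the fundamental theorem of calculus upgrades the limit integral identity to a genuine classical solution rather than merely an a.e. one. The supporting observations (the mollified coefficients restore full Hypothesis \ref{ipos-1}(i), $g$ needs no regularization, uniqueness still comes from Proposition \ref{maximum}) are all accurate.
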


\section{An example}
\label{sect-4}
 In this section, we exhibit a class of nonautonomous
elliptic operators with unbounded coefficients that satisfy the
assumptions of Theorems \ref{thm-main-2} and
\ref{thm-schauder-cont-2}.

Let ${\mathscr A}$ be the elliptic operator defined by
\begin{align*}
({\mathscr A}\psi)(t,x)=
&(1+|x|^2)^p\sum_{i,j=1}^Nq_{ij}^{(0)}(t,x)D_{ij}\psi(x)+
b^{(0)}(t)(1+|x|^2)^{q} \langle x, \nabla_x \psi(x)
\rangle\\
&+(c^{(0)}(t,x)-|x|^{2r})\psi(x),
\end{align*}
for any $(t,x)\in [0,T]\times\R^N$, on smooth functions
$\psi:\R^N\to\R$.
We assume that the coefficients of the operator ${\mathscr A}$ satisfy either
\begin{ipos}
\label{ipos-3}
~
\par
\begin{enumerate}[\rm (i)]
\item
The functions $q_{ij}^{(0)}$ $(i,j=1,\ldots,N$) and $c^{(0)}$ are
thrice continuously differentiable with respect to the spatial
variables in $[0,T]\times\R^N$. Moreover, they are bounded together
with their spatial derivatives up to the third-order. Further, there
exists $\delta\in (0,1)$ such that, for any $R>0$, the third-order
spatial derivatives of $q_{ij}^{(0)}$ ($i,j=1,\ldots,N$) and
$c^{(0)}$ are $\delta$-H\"older continuous in $B(0,R)$, uniformly
with respect to $t\in [0,T]$;
\item
there exists a positive constant $\nu_0$ such that
\begin{eqnarray*}
\langle Q^{(0)}(t,x)\xi,\xi\rangle\ge\nu_0|\xi|^2,\qquad\;\,t\in
[0,T],\;\,x,\xi\in\R^N;
\end{eqnarray*}
\item
$p,q,r\in\N\cup\{0\}$ satisfy $p\le q$;
\item
the function $b^{(0)}$ is continuous and $b^{(0)}(t)<0$ for any
$t\in [0,T]$,
\end{enumerate}
\end{ipos}
\noindent
or
\begin{ipos}
\label{ipos-4}
~
\par
\begin{enumerate}[\rm (i)]
\item
The functions $q_{ij}^{(0)}$ $(i,j=1,\ldots,N$) and $c^{(0)}$ are
$B^{0,3}([0,T]\times\R^N)$ and the third-order spatial derivatives
are in $M^{0,\delta}([0,T]\times\R^N)$ for some $\delta\in (0,1)$;
\item
there exists a positive constant $\nu_0$ such that
\begin{eqnarray*}
\langle Q^{(0)}(t,x)\xi,\xi\rangle\ge\nu_0|\xi|^2,\qquad\;\,t\in
F,\;\,x,\xi\in\R^N,
\end{eqnarray*}
where $F$ is a measurable set whose complement is negligible in $[0,T]$;
\item
$p,q,r\in\N\cup\{0\}$ satisfy $p\le q$;
\item
the function $b^{(0)}$ is bounded and measurable in $(0,T)$ and
there exists a negative constant $b_0$ such that $b^{(0)}(t)\le b_0$
for almost any $t\in (0,T)$.
\end{enumerate}
\end{ipos}

Let us check that, under Hypotheses \ref{ipos-3}, the operator
${\mathscr A}$ satisfies the assumptions of Theorem
\ref{thm-schauder-cont-2}. The same arguments will show that, if
${\mathscr A}$ satisfies Hypotheses \ref{ipos-4}, then it satisfies
also Hypotheses \ref{ipos-2}, so that Theorem \ref{thm-main-2} holds
true.

It is immediate to check that the coefficients $q_{ij}$, $b_j$ and
$c$, where
\begin{align*}
&q_{ij}(t,x)=q_{ij}^{(0)}(t,x)(1+|x|^2)^p,\\
&b_j(t,x)=b^{(0)}(t)x_j(1+|x|^2)^q,\\
&c(t,x)=c^{(0)}(t,x)-|x|^{2r},
\end{align*}
for any $(t,x)\in [0,T]\times\R^N$ and any $i,j=1,\ldots,N$, are
thrice continuously differentiable with respect to $x$, and, for any
$R>0$, the  third-order derivatives are H\"older continuous of
exponent $\delta$ with respect to the variable $x\in B(0,R)$,
uniformly with respect to $t\in [0,T]$. Similarly, checking
Hypotheses \ref{ipos-1}(ii) to \ref{ipos-1}(v) is an easy task. As
far as Hypothesis \ref{ipos-1}(vi) is concerned, we observe that
\begin{eqnarray*}
\langle Db(t,x)\xi,\xi\rangle =b^{(0)}(t)(1+|x|^2)^{q-1}\left
((1+|x|^2)|\xi|^2+2q\langle \xi,x\rangle^2\right ),
\end{eqnarray*}
for any $t\in [0,T]$ and any $x,\xi\in\R^N$. Since $b$ is negative
in $[0,T]$, we can estimate
\begin{eqnarray*}
\langle Db(t,x)\xi,\xi\rangle\le
b_0(1+|x|^2)^q|\xi|^2:=d(t,x)|\xi|^2,
\end{eqnarray*}
for any $t,x,\xi$ as above, where $b_0:=\sup_{t\in [0,T]}b^{(0)}$.
It is then easy to check that
\begin{eqnarray*}
\begin{array}{ll}
|D^{\beta}b_j(t,x)|\le \kappa_1(1+|x|^2)^q, &|\beta|=2,3,\\[1mm]
|D^{\beta}c(t,x)|\le \kappa_2(1+|x|^2)^r,   &|\beta|=1,2,3,
\end{array}
\end{eqnarray*}
for some positive constants $\kappa_1$ and $\kappa_2$, any $(t,x)\in
[0,T]\times\R^N$ and any $j=1,\ldots,N$. Hence, we can take
$r(t,x):=\kappa_1(1+|x|^2)^q$ and
$\varrho(t,x):=\kappa_2(1+|x|^2)^r$ in \eqref{cond-deriv-b} and
\eqref{cond-deriv-c}. Condition \eqref{cond-funz} then reads as
follows:
\begin{eqnarray*}
b_0(1+|x|^2)^q+L_1\kappa_1(1+|x|^2)^q+L_2\kappa_2^2(1+|x|^2)^{2r}\le
L_3\nu_0(1+|x|^2)^p,
\end{eqnarray*}
for any $t\in [0,T]$ and any $x\in\R^N$. This inequality is clearly satisfied by suitable constants
$L_1,L_2,L_3$ by Hypothesis \ref{ipos-3}(iii). Finally, taking $\varphi(x)=1+|x|^2$ for any $x\in\R^N$,
we get
\begin{align}
({\mathscr A}\varphi)(t,x)&=2{\rm Tr}(Q(t,x))+2b^{(0)}(t)|x|^2(1+|x|^2)^q+(c_0(t,x)-|x|^{2r})(1+|x|^2)\nonumber\\
&\le
2\|Q^{(0)}\|_{\infty}(1+|x|^2)^p\hs{1}+\hs{1}2b_0|x|^2(1+|x|^2)^q\hs{1}+\hs{1}(\|c^{(0)}\|_{\infty}-|x|^{2r})(1+|x|^2),
\label{last}
\end{align}
where $\|Q^{(0)}\|_{\infty}=\sup_{(t,x)\in
[0,T]\times\R^N}\|Q^{(0)}(t,x)\|$. Due to Hypothesis
\ref{ipos-3}(iii), we can estimate the last side of \eqref{last}
from above by $\kappa_3+\|c^{(0)}\|_{\infty}(1+|x|^2)$ for any
$(t,x)\in [0,T]\times\R^N$ and some positive constant $\kappa_3$.
Hence, Hypothesis \ref{ipos-1}(vii) is satisfied with
$\lambda=\|c^{(0)}\|_{\infty}+\kappa_3$.

\section*{Acknowledgments}
The author wishes to thank the anonymous referees for the careful reading of the paper.



\begin{thebibliography}{99}

\bibitem{BL} (2139521)
\newblock
M. Bertoldi and L. Lorenzi,
\newblock
\emph{Estimates of the derivatives for parabolic
operators with unbounded coefficients},
\newblock
Trans. Amer. Math. Soc. \textbf{357} (2005), 2627--2664.


\bibitem{libro} (2313847)
\newblock
M. Bertoldi and L. Lorenzi,
\newblock
``Analytical Methods for Markov Semigroups'',
\newblock
Chapman and Hall/CRC Press, Boca Raton, FL, 2007.

\bibitem{cerrai} (1373775)
\newblock
S. Cerrai,
\newblock
\emph{Elliptic and parabolic equations in $\R^n$ with coefficients
having polynomial growth},
\newblock
Comm. Partial Differential Equations \textbf{21} (1996), 281--317.

\bibitem{friedman} (0181836)
\newblock
A. Friedman,
\newblock
``Partial Differential Equations of Parabolic Type,''
\newblock
Prentice-Hall Inc., 1964.

\bibitem{KLL}
\newblock
M. Kunze, L. Lorenzi and A. Lunardi,
\newblock
\emph{Nonautonomous Kolmogorov parabolic equations with unbounded coefficients},
\newblock
Trans. Amer. Math. Soc. (to appear).

\bibitem{KP}
\newblock
N.V. Krylov and E. Priola,
\newblock
\emph{Elliptic and parabolic second-order PDEs with growing coefficients,}
\newblock
preprint, arXiv:0806.3100.


\bibitem{KCL} (0393848)
\newblock
S.N. Kru\v{z}kov, A. Castro and M. Lopes,
\newblock
\emph{Schauder type estimates and theorems on the existence of the solutions of fundamental
problems for linear and nonlinear parabolic equations},
\newblock
Dokl. Akad. Nauk. SSSR \textbf{220} (1975), 277--280 (in Russian); Soviet Math.
Dokl. \textbf{16} (1975) 60--64 (in English).

\bibitem{KCL-1}
\newblock
S.N. Kru\v{z}kov, A. Castro and M. Lopes,
\newblock
\emph{Mayoraciones de Schauder y teorema de exi\-stencia de las soluciones
del problema de Cauchy para ecuaciones parabolicas lineales y no lineales I},
\newblock
Cienc. Mat. (Havana) \textbf{1} (1980), 55--76.

\bibitem{KCL-2}
\newblock
S.N. Kru\v{z}kov, A. Castro and M. Lopes,
\newblock
\emph{Mayoraciones de Schauder y teorema de exi\-stencia de las soluciones
del problema de Cauchy para ecuaciones parabolicas lineales y no lineales II},
\newblock
Cienc. Mat. (Havana) \textbf{3} (1982), 37--56.

\bibitem{Lo1} (1786159)
\newblock
L. Lorenzi,
\newblock
\emph{Optimal Schauder estimates for parabolic problems with data
measurable with respect to time},
\newblock
SIAM J. Math. Anal. \textbf{32} (2000), 588--615.

\bibitem{Lo2} (2402745)
\newblock
L. Lorenzi,
\newblock
{\em On a class of elliptic operators with unbounded
time and space dependent coefficients in $\mathbb R^N$},
\newblock
Functional analysis \& evolution equation: Dedicated to Gunter Lumer,
\newblock
Birkh\"auser, 2007.

\bibitem{lunardi-sem} (1406778)
\newblock
A. Lunardi,
\newblock
\emph{An interpolation method to characterize domains of generators of semigroups},
\newblock
Semigroup Forum \textbf{53} (1996), 321-329.


\bibitem{lunardi-studia} (1490820)
\newblock
A. Lunardi,
\newblock
{\em Schauder theorems for linear elliptic and parabolic problems with unbounded coefficients in $\R^n$},
\newblock
Studia Mathematica, {\bf 128} (1998), 171--198.

\bibitem{nazarov-uraltseva} (0821477)
\newblock
A.I. Nazarov and N.N. Ural'tseva,
\newblock
\emph{Convex-monotone hulls and an estimate of the maximum of the
solution of a parabolic equation},
\newblock
J. Soviet Math. \textbf{37} (1987), no. 1, 851--859.


\bibitem{triebel} (0503903)
\newblock
H. Triebel,
\newblock
``Interpolation Theory, Function Spaces, Differential operators,'',
\newblock
North-Holland, Amsterdam, 1978.


\end{thebibliography}
\end{document}